\DeclareMathOperator{\Div}{div}	
\DeclareMathOperator{\dist}{dist}	
\DeclareMathOperator{\supp}{supp}	
\newcommand{\norm}[1]{\left\|#1\right\|}   
\newcommand{\abs}[1]{\left\lvert#1\right\rvert}  
\newcommand{\field}[1]{\mathbb{#1}}
\def\R{\field{R}} 
\def\Id{\field{I}} 
\newcommand{\vektor}[1]{{\mathbf{#1}}} 
\def\ro{\varrho}  
\def\en{\vektor{n}}  
\def\fib{\boldsymbol{\varphi}} 
\def\epsil{\varepsilon} 
\def\db{\mathtt{db}} 
\def\u{\vektor{u}} 
\def\uB{\vektor{u}_B} 
\def\uin{\vektor{u}_\infty} 
\newcommand{\refx}[1]{(\ref{#1})}
\def\vecx{\vektor{x}}
\def\vecX{\vektor{X}}
\def\vecy{\vektor{y}}
\newcommand{\de}[1]{\mathrm{d}{#1}}     
\def\dx{\de{\vecx}}	
\def\dt{\de{t}}
\def\ds{\de{s}}
\def\dtau{\de{\tau}}
\def\reg{r} 
\theoremstyle{plain}
\newtheorem{thm}{Theorem}
\newtheorem{lem}{Lemma}
\newtheorem{defin}{Definition}
\newtheorem{remark}{Remark}
\title{Rigid body in compressible flow with general inflow-outflow boundary data}
\author[1]{\v{S}imon Axmann}
\author[2]{\v{S}{\'a}rka Ne{\v{c}}asov{\'a}}
\author[3]{Ana Rado{\v{s}}evi{\'c}}
\date{}
 \affil[1]{University of Chemistry and Technology, Prague, Czechia}
 \affil[2]{Institute of Mathematics, Czech Academy of Sciences, Prague, Czechia}
 \affil[3]{Faculty of Economics and Business, University of Zagreb, Croatia}
\begin{document}
\maketitle

\section{Introduction} \label{intro}
We consider an initial-boundary value problem for a system of PDEs describing the motion of a rigid body placed in the compressible Newtonian fluid. We neglect the thermal effects, assuming the pressure given by the isentropic equation of state.  We deal with the motion of a rigid body in the compressible flow with the  general inflow-outflow boundary data. 
The novelty lies in the fact we are considering the  general inflow-outflow boundary data which were not considered before in the context of fluid-structure interaction.   

\subsection{Setting and notation}

We assume $\Omega\subset \field{R}^3$ to be a given bounded domain with sufficiently regular boundary. $S(t)\subset \Omega$ stands for the region which is occupied by the rigid body at time $t$, while $F(t)= \Omega\setminus S(t)$ for the part of the domain occupied by the fluid at time $t$.

Assuming sufficiently regular $S_0=S(0)$, we notice that the outer normal to $F(t)$ exists everywhere and we denote it by $\en$. Concerning the inhomogeneous boundary conditions, $\uB$ represents the prescribed velocity on the boundary and we denote
$$\Gamma_{in} = \{\vecx\in \partial\Omega \:|\: \uB \cdot \en <0 \}, \qquad \Gamma_{out} = \{\vecx\in \partial\Omega \:|\: \uB \cdot \en \geq 0 \}.$$
Furthermore, it is convenient to define the following time-space regions:
  \begin{align*}
Q_T =&\: (0,T)\times \Omega & Q^f =& \bigcup_{t\in(0,T)} \{t\}\times F(t) \\
 Q^S =& \bigcup_{t\in(0,T)} \{t\}\times S(t) & Q^{\partial S} =&\bigcup_{t\in(0,T)} \{t\}\times \partial S(t)
\end{align*}

In the classical formulation of the problem, one searches for 
 the density $\ro_f$ and the velocity field $\u_f$ of the fluid in $Q^f$, and the density  $\ro_s$ and the velocity field $\u_s$ of the rigid body in $Q^S.$ Naturally, the sets $S(t)$ and $F(t)$ need to be considered as unknown parts of the solution as well. 
 
 \medskip

\noindent For the rigid body we exploit the following usual notation:

$M=\int_{S(t)}{\ro_s(t,\vecx)}\dx\:$ denotes  the total mass of the rigid body,

$\vecX(t)=\int_{S(t)}{\vecx \ro_s(t,\vecx)}\dx\:$ denotes  the center of mass of the rigid body,

$\vektor{V}(t) = \dfrac{\de{}}{\de{t}} \vecX(t)\:$ denotes  the velocity of the center of mass of the rigid body,

$\field{J}(t) =\int_{S(t)}{ \ro_s(t,\vecx)\Bigl(| \vecx-\vecX(t)|^2 \,\Id  -  (\vecx-\vecX(t))^{T} (\vecx-\vecX(t))  \Bigr) }\dx \:$ denotes the inertia tensor of the rigid body, assuming $\vecx$, $\vecX$ to be row vectors.

\medskip

Furthermore, for the Newtonian fluid we denote
 by  $\field D(\u)=\frac{1}{2}\bigl(\nabla\u +\nabla^{T}\u \bigr)$ the symmetrical part of the velocity gradient, by $ \field S(\u_f) =  2\mu \field D(\u_f) + \lambda \Div\u_f\Id$ the viscous part of the stress tensor, and by
 $ p(\ro_f) = a \ro_f^{\gamma}$ the adiabatic pressure, we set $P(\ro_f) = a \dfrac{\ro_f^\gamma}{\gamma-1}$.

As the solid body is assumed to be rigid, we have
\begin{equation}
    \u_S(t,\vecx) = \vektor{V}(t)+ \field{Q}(t)\bigl(\vecx-\vecX(t)\bigr) \label{rigidveloc}
\end{equation}   with skew-symmetric tensor $\field{Q}$, whence there exists $\vektor{w}$ such that
$$\u_S(t,\vecx) = \vektor{V}(t)+\vektor{w}(t)\times (\vecx-\vecX(t)).$$

\medskip

\noindent We assume that initial position of the rigid body $S_0 \subset \Omega$ satisfies the following conditions:
\begin{itemize}
    \item There exists $r>0$ such that set $
O=\{ \vecx\in S_0| \dist (\vecx,\partial S_0 ) > r \}
$ is open, connected with $C^{2+\eta}$-boundary.
\item The distance of the boundary $\partial\Omega$ and solid $S_0$ is greater than $h>0.$
\end{itemize}
Our existence result holds as long as we are "away from collision", more precisely as long as the following condition is satisfied \begin{equation}\dist\bigl(S(t),\partial\Omega\bigr)\geq h.
\label{collision}\end{equation}
This assumption actually directly corresponds to the fact that we prescribe quite general outflow velocity field. In particular it is not necessarily rigid and thus not compatible with an "outflow" of the rigid body.

\medskip

\subsection{Brief overview of known results}

In this section we will mention some articles on the theory of existence of compressible Navier-Stokes equations and further comment on works devoted to fluid-structure interaction problems.\\
\begin{itemize}
	\item {\textit{Theory of  compressible Navier-Stokes equations:}} The global existence of {\bf strong solutions} for a small perturbation of a stable constant state was described in the celebrated work \cite{matnis}. In the article \cite{vallizak}, the authors established the local in time existence of strong solutions in the presence of inflow and outflow of the fluid through the boundary. Moreover,  in the same work they also gave the proof of global in time existence for small data in the absence of the inflow. 
 
 The theory of {\bf weak solutions} to the compressible Navier--Stokes equation equipped with homogeneous boundary data is starting from the pioneering works of Lions\cite{Lions93,MR1637634} 
  and Feireisl~\cite{FeNoPe01}.
 P.-L. Lions proved in \cite{Lions93,MR1637634} the global existence of renormalized weak solution with bounded energy for an isentropic fluid (i.e. $p(\rho)=\rho^{\gamma}$) with the adiabatic constant $\gamma>3d/(d+2),$ where $d$ is the space dimension.  After that by E. Feireisl $\mathit{et\, al.}$  the result was generalized to cover the range $\gamma>3/2$ in dimension $3$, and $\gamma>1$ in dimension $2$ in \cite{FeNoPe01}.

 Let us mention different approach presented in the recent work of Bresch, Jabin \cite{BreshJabin} where the authors introduce a completely new method to obtain compactness of the density. The well-posedness issues of the compressible Navier-Stokes equations for critical regularity data can be found in work of R. Danchin \cite{danchin}. For further references see  \cite{NoSt04, FeKaPo16}.
 
 The theory for general boundary data is much more recent. After 
initial results for some special cases \cite{Novo05, Giri11}, the existence of weak solutions for isentropic case  \cite{ChJiNo19, ChNoYa19}, as well as for the full Navier--Stokes--Fourier system \cite{FeNo21, ChFe22} was established. Let us also mention the general nonzero inflow-outflow problem, see \cite{KKNN}.

\item \textit{Rigid body in incompressible fluid:} The mathematical analysis of systems describing the motion of a rigid body in a viscous incompressible fluid is nowadays well developed.  The proof of existence of weak solutions until a first collision can be found in several papers, see \cite{CoJoTu99,DeEs99,GLSE,HOST,SER3}. Later, the possibility of collisions in the case of a weak solution was included, see \cite{F3,DEES2,SaStTu02}. 
For the case where the electromagnetic field is present in fluid, see \cite{BNSS}.

Let us also mention results on strong solutions, see e.g.\ \cite{GGH13,T, Wa}. 

\item \textit{Rigid body in compressible fluid:} Only a  few results are available on the motion of a rigid structure in a compressible fluid.  The existence of {\bf strong} solutions in the $L^2$-framework for small data up to a collision was shown in \cite{BG,roy2019stabilization}. The existence of strong solutions in the  $L^p$ setting based on $\mathcal{R}$-bounded operators was applied in the barotropic case \cite{ HiMu15} and   in the full system \cite{HaMaTaTu19}.

The existence of a {\bf weak} solution with the Dirichlet boundary conditions, also up to a collision but without smallness assumptions, was shown in \cite{DEES2}. Generalization of this result allowing collisions was given in \cite{Feir03, Sche19}.
The weak-strong uniqueness of a compressible fluid with a rigid body can be found in  \cite{ KrNePi20}. Existence of weak solutions in the case of the Navier boundary conditions  was established in \cite{NeRaMy22}. Moreover, for interaction of compressible heat-conducting fluid and rigid body, see
\cite{Brez08, HaMaTaTu19}.
\end{itemize}

\subsection{Problem formulation} 
The problem under consideration is described by the following equations:
\begin{align}
     \partial_t(\ro_f) + \Div(\ro_f \u_f) = 0, \quad  &\:\text{ in }Q^f \label{CEf}\\
\partial_t (\ro_f\u_f) +  \Div(\ro_f \u_f\otimes\u_f) - \Div \field S(\u_f)  +   \nabla  p(\ro_f)=  \vektor{0},\quad  &\:\text{ in }Q^f \\
\u_f = \u_S \quad  &\:\text{ on }Q^{\partial S} \\
M\dfrac{\de{}}{\dt} \vektor{V}(t) =   -\int_{\partial S(t)}{\Bigl(\field{S}(\u_f)  -  p(\ro_f)\Id \Bigr)\en}\de{S} &\:\text{ for }t\in(0,T) \\
\field{J}(t)\dfrac{\de{}}{\dt} \vektor{w}(t) =   \field{J}(t) \vektor{w}(t) \times \vektor{w}(t)-\int_{\partial S(t)}{\Bigl(\vecx-\vecX(t)\Bigr) \times\Bigl(\field{S}(\u_f)  -  p(\ro_f)\Id \Bigr)\en}\de{S} &\:\text{ for }t\in(0,T)
\end{align}
accompanied with initial conditions
\begin{gather}
  \ro_f(0)=\ro_0 \quad\text{ and }\quad\ro_f\u_f(0)=\vektor{m}_0 \text{ in }F(0),\qquad  \ro_s(0)=\ro_0 \text{ in }S(0)\\
   \vektor{V}(0)=\vektor{V_0},\qquad   \vektor{w}(0)=\vektor{w_0}, \qquad \Bigl(\frac{\vektor{m}_0(\vecx)}{\ro_0(\vecx)} = \vektor{V_0}  +  \vektor{w_0}\times(\vecx-\vecX(0)) \text{ for } \vecx\in\partial S(0)\Bigr)
\end{gather}
and boundary conditions
\begin{align}
\u_f = \uB\quad &\text{ on } (0,T)\times\partial\Omega\\
\ro_f = \ro_B \quad &\text{ on } (0,T)\times\Gamma_{in}\label{coupling}
\end{align}

For the corresponding weak formulation, we need to extend the boundary velocity in a suitable way inside the domain. This is provided by the following lemma.

\begin{lem}\label{extend}
Let $\Omega\subset\R^3$ be a bounded domain with Lipschitz boundary and $\uB\in W^{1,\infty}(\partial\Omega;\R^3)$.
Then there exists $h_0>0$ such that for any $h\in(0,h_0)$ there is an
extension vector field $\uin\in W^{1,\infty}(\R^3)\cap C_c(\R^3)$, such that
\begin{align*}
\uin|_{\partial \Omega}=&\:\uB,\\
\Div \uin \geq&\: 0 \text{ a.e. in } U_h = \{ \vecx\in\R^3| \dist (\vecx,\partial\Omega ) < h \},\\
\uin =&\: \boldsymbol{0} \text{ on }\R^3\setminus U_{2h}.
\end{align*}
\end{lem}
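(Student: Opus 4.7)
The plan is to construct $\uin$ locally in boundary patches using a two-part ansatz: a cut-off extension of $\uB$ along the normal direction plus a large additive normal profile that forces the divergence to be nonnegative near $\partial\Omega$. A partition of unity then yields the global extension.

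Cover $\partial\Omega$ by finitely many patches $V_i$ in each of which, after a rotation of axes, $\partial\Omega$ is a Lipschitz graph $\{x_3=f_i(x_1,x_2)\}$ with $\Omega\cap V_i\subset\{x_3<f_i(x_1,x_2)\}$. Writing $d_i(\vecx)=x_3-f_i(x_1,x_2)$ for a proxy of the signed distance, I would set in each patch
\begin{equation*}
\uin^i(\vecx)=\eta\bigl(d_i(\vecx)\bigr)\,\uB\bigl(x_1,x_2,f_i(x_1,x_2)\bigr)+A\,\psi\bigl(d_i(\vecx)\bigr)\,\vektor{e}_3,
\end{equation*}
with $\eta\in C_c^\infty((-2h,2h))$, $\eta(0)=1$; $\psi\in C_c^\infty((-2h,2h))$, $\psi(0)=0$, and $\psi'(s)\geq c_0>0$ on $[-h,h]$ (for instance a smoothed variant of $s(4h^2-s^2)$); and $A>0$ a large parameter. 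A direct calculation gives
\begin{equation*}
\Div\uin^i=\eta'(d_i)\bigl(u_3-\partial_1 f_i\,u_1-\partial_2 f_i\,u_2\bigr)+\eta(d_i)\,(\partial_1 u_1+\partial_2 u_2)+A\,\psi'(d_i),
\end{equation*}
where $(u_1,u_2,u_3)=\uB(x_1,x_2,f_i(x_1,x_2))$. Since $A\psi'(d_i)\geq Ac_0$ on $\{|d_i|<h\}$, choosing $A$ large enough in terms of $\|\uB\|_{W^{1,\infty}}$, $\|f_i\|_{W^{1,\infty}}$, $\|\eta\|_{W^{1,\infty}}$ and $c_0$ makes $\Div\uin^i\geq 0$ on $\{|d_i|<h\}$. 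The trace matches $\uB$ on $\partial\Omega\cap V_i$ because $\eta(0)=1$ and $\psi(0)=0$, and $\uin^i$ vanishes outside $\{|d_i|<2h\}$.

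Next I would assemble $\uin$ via a partition of unity $\{\alpha_i\}$ subordinate to $\{V_i\}$ with $\sum_i\alpha_i\equiv 1$ on a neighborhood of $\partial\Omega$, setting $\uin:=\sum_i\alpha_i\uin^i$. The boundary trace and compact-support properties pass directly. For the divergence,
\begin{equation*}
\Div\uin=\sum_i\alpha_i\,\Div\uin^i+\sum_i\nabla\alpha_i\cdot\uin^i,
\end{equation*}
and the first sum is nonnegative by construction. Using $\sum_i\nabla\alpha_i\equiv 0$ on the neighborhood, the second sum equals $\sum_i\nabla\alpha_i\cdot(\uin^i-\vektor{v})$ for any common Lipschitz reference field $\vektor{v}$; taking $\vektor{v}$ a single fixed Lipschitz extension of $\uB$ and using $\uin^i=\uB$ on $\partial\Omega$ together with Lipschitz continuity, this contribution is $O(h+Ah^3)$. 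For $h$ small enough, it is dominated by the $Ac_0$ excess of the first sum, yielding $\Div\uin\geq 0$ on $U_h$.

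The main obstacle will be this last estimate: the cross terms $\nabla\alpha_i\cdot\uin^i$ carry no a priori sign, and forcing their absolute value below the built-in positive excess of each $\Div\uin^i$ requires both $A$ large and $h$ small, hence the smallness threshold $h_0$ depending on the Lipschitz constants of $\partial\Omega$ and on $\|\uB\|_{W^{1,\infty}(\partial\Omega)}$. The claimed regularity $\uin\in W^{1,\infty}(\R^3)\cap C_c(\R^3)$ then follows from $\uB\in W^{1,\infty}(\partial\Omega)$ and smoothness of $\eta$, $\psi$, and the partition of unity $\{\alpha_i\}$.
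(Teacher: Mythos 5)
Your construction is essentially correct, but it takes a genuinely different route from the paper. The paper's proof is a two-line reduction: it takes the extension $\vektor{V}_\infty$ already built in \cite{Giri11} (which satisfies the trace condition and $\Div\vektor{V}_\infty\geq 0$ near $\partial\Omega$) and multiplies it by a cut-off $\xi$ with $\xi=1$ on $U_h$ and $\xi=0$ off $U_{2h}$; since $\xi\equiv 1$ on $U_h$, the divergence there is untouched, and the support condition is immediate. You instead rebuild the extension from scratch: local graph coordinates, a tangential cut-off extension of $\uB$ plus a large normal profile $A\psi(d_i)\vektor{e}_3$ whose derivative forces positivity of the divergence, glued by a partition of unity with the cross terms $\nabla\alpha_i\cdot\uin^i$ controlled by subtracting a fixed Lipschitz reference extension. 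What your approach buys is self-containedness (no reliance on Girinon's construction) and an explicit dependence of $h_0$ and $A$ on the Lipschitz data; what the paper's approach buys is brevity, since the only genuinely new point relative to \cite{Giri11} is the compact support, which the cut-off handles trivially.

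Two quantitative points in your gluing step deserve care, though neither is fatal. First, the sets where you prove positivity are vertical slabs $\{|d_i|<h\}$, whereas $U_h\cap V_i$ is only contained in $\{|d_i|<\sqrt{1+L_i^2}\,h\}$ with $L_i$ the Lipschitz constant of $f_i$; your $\psi$ must therefore have $\psi'\geq c_0$ on this larger interval (your sample $\psi(s)=s(4h^2-s^2)$ fails this when $L_i\geq 1/\sqrt{3}$), which only amounts to widening the profile. Second, the claimed bound $O(h+Ah^3)$ for $|\uin^i-\vektor{v}|$ is too optimistic: since $\|\eta'\|_\infty\sim 1/h$, the tangential part of $\uin^i$ has Lipschitz constant $O(1/h)$ and the difference is only $O(1+Ah^3)=O(1)$ for the natural scaling $A\sim h^{-3}$. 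This still does not break the argument, because the built-in excess $Ac_0\sim 1/h$ dominates any $O(1)$ cross term for $h$ small, but the comparison should be stated at that level rather than as $O(h)$ versus $O(1/h)$.
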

\begin{proof}
We can simply take the extension $\vektor{V}_\infty$ used in previous works (see \cite{Giri11})
and multiply it by a suitable smooth cut-off function $\xi\in C_c^{\infty}(\R^3)$, such that
$\xi=1$ on $U_h$ and $\xi=0$ on $\R^3\setminus U_{2h}$.
Then $\uin = \xi \vektor{V}_\infty$ clearly satisfies all of the conditions, including the last one.
\end{proof}

\begin{remark}
	The condition $\Div\uin\geq0$ is used to ensure the non-negativity of the term $p(\ro)\Div\uin$ near the boundary $\partial\Omega$. This condition plays a vital role in the passage to the limit of the integral involving $p(\varrho)\Div\uin$ in the energy inequality when transitioning from the approximating system to the original system (cf. Section \ref{Sec:ArtificialViscosity}).
	Namely, unlike the no-slip boundary conditions, the general boundary conditions do not allow for better pressure estimates than $L^{\infty}(0,T;L^1(\Omega))$ up to the boundary. 
	The improved estimate of pressure in the $L^q$ norm, which enables us to take the limit in the integral involving $p(\ro)\Div\uin$, is restricted to the interior of the domain, see \eqref{estimate_rho_1}.
\end{remark}

The outline of the article is the following: In Section \ref{intro} the model is described. The corresponding weak formulation together with the main result follows in Section \ref{Main result}. The proof of the main result is established in Section \ref{Construction}. First, we introduce  the approximation scheme (for parameters: $\delta, \epsilon, n, r $), including the definition of the approximation problem and we state the existence of the solution to the approximate problem. Further, we show the existence of the approximate problem. After that we pass to the limit with $n$ to identify the rigid body. Moreover, we pass to the limit with artificial viscosity ($\epsilon \to 0$) and as the last step we pass to the limit with the artificial pressure ($\delta \to 0$).

\section{Main result}\label{Main result}
 \subsection{Definition of weak solution}
 In this section we will introduce the definition of a weak solution to our problem. We will search for unknown density and velocity field in the whole domain defined in the following way
$$\ro = \begin{cases} \ro_f& \text{ in } Q^f\\
                      \ro_s& \text{ in } Q^S
\end{cases}
\qquad\text{ and }\qquad
\u = \begin{cases}  \u_f& \text{ in } Q^f\\
                    \u_s& \text{ in } Q^S
\end{cases}. $$

\begin{defin}\label{def_weak}
We say that triple $(\ro,\u,\eta)$ is a weak solution to problem \refx{CEf}-\refx{coupling} if $\uin$ is an~extension of the boundary data $\uB$ from Lemma \ref{extend} and
\begin{enumerate}
  \item functions $\ro$ and $\u$ belong to regularity classes
  \begin{gather*}
  \ro\geq0 \text{ a.e. in }Q_T,\quad \ro\in L^{\infty}\bigl({0,T;} L^{\gamma}(\Omega)\bigr), \\
  \u-\uin \in L^2\bigl({0,T;}W^{1,2}_0(\Omega)\bigr), \quad \ro\u \in C_{\text{weak}}({[0,T];}L^1(\Omega))
  \end{gather*}
and velocity field $\u$ is {\it compatible} with parametrized isometries $\eta[t]$ describing the evolution of $S(t)$, such that the mapping $t \mapsto \eta[t](y)
\text{ is absolutely continuous on } [0, T] $ and
$$
S(t) = \eta[t](S_0),\:\:\eta[t](\vecy) = \vecX(t) + \field{O}(t)\vecy ,\quad \field{O}\in SO(n),\: t\in[0,T],
$$
$$ \Bigl( \dfrac{\de{}}{\de{t}} \eta[t] \Bigr)\eta[t]^{-1}(\vecx)=\u(t,\vecx),\: \vecx\in \overline{S}
(t) \text{ and a.e. }t\in(0,T) ,$$
\item functions $\ro,\:\u$ satisfy in the sense of distributions the continuity equation
\begin{multline}\label{continuity_weak}
\int_{\Omega}{(\ro\psi)(\tau,.)}\dx - \int_{\Omega}{\ro_0(.)\psi(0,.)}\dx = \int_{0}^\tau \int_{\Omega} \bigl(\ro\partial_t \psi +\ro\u \cdot\nabla\psi  \bigr) \dx\dt - \int_0^\tau\int_{\Gamma_{in}}\ro_B\uB\cdot\en \psi \de{S}\dt\\
\forall\tau\in[0,T],\:\forall \psi \in C^1_{c}\bigl([0,T]\times(\Omega\cup\Gamma_{in})\bigr)
\end{multline}
as well as its renormalized version
\begin{multline} \label{renormalized_weak}
\int_{\Omega}{(b(\ro)\psi)(\tau,.)}\dx - \int_{\Omega}{b(\ro_0)(.)\psi(0,.)}\dx \\
 = \int_{0}^\tau \int_{\Omega} \Bigl(b(\ro)\partial_t \psi +b(\ro)\u \cdot\nabla\psi -  \psi\bigl(b'(\ro)\ro-b(\ro)\bigr)\Div\u  \Bigr) \dx\dt - \int_0^\tau\int_{\Gamma_{in}}b(\ro_B)\uB\cdot\en \psi \de{S}\dt\\
\forall\tau\in[0,T],\:\forall \psi \in C^1_{c}\bigl([0,T]\times(\Omega\cup\Gamma_{in})\bigr), \forall b\in C^1\bigl([0,\infty)\bigr) \text{ with } b'\in C^1_c\bigl([0,\infty)\bigr),
\end{multline}
 \item the weak variational formulation of the momentum equation holds true
\begin{multline}
\int_{\Omega}{(\ro\u\cdot\fib)(\tau,.)}\dx - \int_{\Omega}{\vektor{m}_0(.)\cdot\fib(0,.)}\dx \\
= \int_{0}^\tau \int_{\Omega} \Bigl(\ro\u \cdot\partial_t \fib +(\ro\u \otimes \u):\field{D}(\fib)  + p(\ro)\Div\fib - \field{S}(\u):\field{D}(\fib) \Bigr) \dx\dt \\
\forall\tau\in[0,T],\:\forall \fib \in \mathcal{R}(\overline{Q}^S) = \bigl\{\fib\in C^{\infty}_c(Q_T)|\field{D}(\fib)=0\text{ on some neighbourhood of } \overline{Q}^S \bigr\},
\end{multline}
\item we have the energy inequality
\begin{multline}
\int_\Omega \Bigl(\frac12\ro|\u-\uin|^2 + P(\ro) \Bigr)(\tau)\dx + \int_0^\tau\int_\Omega  \field{S}(\u - \uin): \field D (\u - \uin) \dx \dt \\
\leq \int_\Omega \Bigl(\frac12\ro_0|\u_0-\uin|^2 + P(\ro_0) \Bigr)\dx -\int_0^\tau\int_\Omega \ro\u\cdot\nabla\uin \cdot (\u-\uin)  \dx \dt\\
- \int_0^\tau \int_\Omega  p(\ro)\Div\uin \dx \dt - \int_0^\tau\int_\Omega  \field{S}(\uin): \field D (\u - \uin) \dx \dt\\
-\int_0^\tau\int_{\Gamma_{in}} P(\ro_B)\uB\cdot\en  \de{S} \dt,
\end{multline}
for a.e. $\tau\in(0,T).$
\end{enumerate}
\end{defin}

Our goal is to prove the following theorem.
\begin{thm} \label{main}
	Let $\Omega\subset\R^3$ be a bounded domain of class $C^{2+\nu}$, $\nu>0$ and let $h>0$. 
Suppose that
	$$  
	(\ro\u)_0 \in L^{2}(\Omega),\quad\ro_B\in C(\partial\Omega),
	\quad\uB\in C^2(\partial\Omega),
	$$ 
	$$
	\ro_0\geq 0,
	\quad \int_{\Omega}\ro_0\,\dx>0,
	\quad \ro_B\geq \underline{\ro}_B > 0,
	$$
	\begin{equation*}
		\int_{\Omega} \left(\frac{1}{2}\rho_0|\u_0|^2+P(\ro_0)\right)\,\dx <\infty
	\end{equation*}
	and the pressure $p$ is given by
	\begin{equation*}
	p(\ro) = a\ro^{\gamma},\quad\gamma>\frac{3}{2}.
	\end{equation*}
	Then for $\uin$ an~extension of the boundary data $\uB$ from Lemma \ref{extend} there exists a time $T>0$ and a weak solution in the sense of Definition \ref{def_weak} such that $\dist(S(t),\partial\Omega)\geq h$, for all $t\in[0,T]$.
\end{thm}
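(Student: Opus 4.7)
The plan is to follow a four-level approximation scheme in the spirit of Feireisl, adapted to fluid-structure interaction with nontrivial inflow-outflow boundary data. First I would fix the extension $\uin$ provided by Lemma \ref{extend} and reformulate the problem for the perturbation $\u-\uin\in L^2(0,T;W^{1,2}_0(\Omega))$. The rigid body is approximated by treating the whole of $\Omega$ as a fluid whose viscosity is enlarged by a factor depending on $r$ on a neighbourhood of $S(t)$, penalizing $\field{D}(\u)$ there; letting $r\to0$ eventually forces $\u$ to be a rigid motion on $S(t)$. In parallel, I would introduce an artificial pressure $p_\delta(\ro)=a\ro^\gamma+\delta\ro^\beta$ with $\beta$ large, an artificial viscosity term $\epsilon\Delta\ro$ in the continuity equation (with the inflow Dirichlet datum $\ro_B$ on $\Gamma_{in}$ and homogeneous Neumann elsewhere), and a Galerkin projection of the momentum equation onto a finite-dimensional subspace of $W^{1,2}_0(\Omega)$ indexed by $n$.

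Next I would establish existence of a solution to the fully regularized problem on a short time interval by a fixed point argument. Given a candidate velocity one solves the parabolic continuity equation for $\ro$, integrates the rigid-body ODEs to obtain the isometry $\eta[t]$, and then solves the Galerkin momentum system; Schauder's theorem closes the loop, and uniform-in-$t$ a priori estimates extend the solution up to the first time at which $\dist(S(t),\partial\Omega)=h$. The basic energy identity is obtained by testing the momentum equation with $\u-\uin$ and using the renormalized continuity equation with $b(\ro)=P(\ro)$ to generate the pressure term; the $\Gamma_{in}$ and $\Gamma_{out}$ contributions are tracked via the splitting $\uB\cdot\en<0$ vs.\ $\geq0$ as in \cite{ChJiNo19,ChNoYa19}.

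Then I pass to the successive limits $n\to\infty$, $\epsilon\to0$, $\delta\to0$. The Galerkin limit $n\to\infty$ follows from the uniform energy estimates, and identifies the rigid-body region through convergence of the isometries $\eta^n[t]$. The $\epsilon\to0$ limit relies on the Lions--Feireisl effective viscous flux identity to pass to the limit in $p_\delta(\ro)\Div\u$; the extra $\Gamma_{in}$ boundary contribution is handled by the renormalization arguments of \cite{ChJiNo19}. The $\delta\to0$ limit uses an improved pressure estimate in $L^{\gamma+\theta}_{\mathrm{loc}}(\Omega)$ with $\theta=\tfrac{2\gamma}{3}-1>0$ for $\gamma>\tfrac{3}{2}$, obtained by testing the momentum equation with a Bogovskii-type corrector of $\ro^\theta$ compactly supported strictly inside $\Omega$. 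Finally, $r\to0$ yields a limit velocity which is a rigid motion on $S(t)$, from which $\vektor{V}$ and $\vektor{w}$ are extracted, and the triple $(\ro,\u,\eta)$ is verified to meet Definition \ref{def_weak}.

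The principal obstacle is the treatment of the pressure close to $\partial\Omega$. The general inflow-outflow data preclude any estimate better than $L^\infty(0,T;L^1(\Omega))$ on $p(\ro)$ up to the boundary, so the terms $\int p(\ro)\Div\uin\,\dx\,\dt$ and $\int_{\Gamma_{in}}P(\ro_B)\uB\cdot\en\,\de{S}\,\dt$ in the energy inequality cannot be absorbed by absolute integrability. Instead one exploits the sign $\Div\uin\geq0$ on the collar $U_h$ (Lemma \ref{extend}) to ensure $p(\ro)\Div\uin\geq0$ near $\partial\Omega$, so that this term has the right sign when passing to the limit. The no-collision hypothesis \eqref{collision} is essential here, because it keeps the support of the high-viscosity zone disjoint from $U_h$, so the rigid-body penalization and the inflow-outflow layer interact only through the pure-fluid region where the interior $L^{\gamma+\theta}$ pressure bound is available. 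Propagating these observations coherently through all four limits is the main technical difficulty.
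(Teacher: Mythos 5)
Your overall architecture is the right one, and you correctly identify the central difficulty (the sign condition $\Div\uin\geq0$ on the boundary collar compensating for the absence of pressure estimates better than $L^\infty(0,T;L^1(\Omega))$ up to $\partial\Omega$). However, there are two genuine gaps. The first is the order of your limits: you postpone the solidification (penalization) limit to the very end, after $\epsilon\to0$ and $\delta\to0$. The paper, following Feireisl, passes to the high-viscosity limit $n\to\infty$ \emph{first}, while the artificial viscosity $\epsilon>0$ is still present: at that stage $\nabla\ro\in L^2$ and the density is compact essentially for free, so no effective-viscous-flux argument is ever needed while the viscosity is still variable. In your ordering, the effective viscous flux identity at the $\epsilon\to0$ and $\delta\to0$ stages must be established in the presence of the non-constant penalized viscosity; the identity for $p_\delta(\ro)-(2\mu+\lambda)\Div\u$ relies on the constant-coefficient structure of the Lam\'e operator and does not survive a space-dependent coefficient without substantial additional commutator work. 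Relatedly, your improved pressure bound must be localized not only away from $\partial\Omega$ but also away from the (approximate) solid, i.e.\ on compacts $K^f\subset Q^f$, which is only meaningful once the body has already been identified.

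The second gap: you extend each approximate solution ``up to the first time at which $\dist(S(t),\partial\Omega)=h$,'' but you never show that this time is bounded below by a positive constant \emph{independent of the approximation parameters}; without that, the existence times could degenerate to zero along the sequence and the limit would live on a trivial interval. The paper proves $|\eta[t](\vecx_0)-\vecx_0|\le t^{1/2}c(data,T)$ from the uniform $L^2(0,T;W^{1,2}(\Omega))$ bound on the velocity, hence $T_{max}\geq T_0=\min\bigl\{T,\bigl((d-h)/c(data,T)\bigr)^2\bigr\}>0$ uniformly in $\epsilon$ and $\delta$. A smaller point: for the regularized continuity equation you impose a Dirichlet condition $\ro=\ro_B$ on $\Gamma_{in}$, whereas the paper uses the flux (Robin-type) condition $\epsilon\nabla\ro\cdot\en=(\ro-\ro_B)[\uB\cdot\en]^{-}$; it is this choice that makes the inflow boundary terms in the energy inequality combine into the nonnegative convexity expression $P_\delta(\ro_B)-P'_\delta(\ro)(\ro_B-\ro)-P_\delta(\ro)$, and with a genuine Dirichlet condition the uniform energy estimate is not obtained in the same way.
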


\section{Construction}\label{Construction}

\subsection{Approximation scheme}
For our approximation scheme, we introduce the following parameters:
\begin{itemize}
\item $\delta>0$ (pressure integrability) with $\beta>\max\bigl\{ \frac{9}{2}, \gamma\bigr\}$
\item $\epsil>0$ (artificial viscosity)
\item $n>0$ (artificial high viscosity for solidification) and $\reg>0$ (velocity regularization)
\end{itemize}
We denote $p_\delta(\ro) = p(\ro) + \delta \ro^\beta, $ and $P_\delta(\ro) = P(\ro)  + \delta  \dfrac{\ro^\beta}{\beta-1}.$
Furthermore, for a function $\chi$ representing the signed distance from the interface between the fluid and the rigid body, we will denote the artificial viscosities defined on the entire domain $\Omega$ as $\mu_n(\chi)$ and $\lambda_n(\chi)$. We will define them later in such a way that within the fluid domain, the viscosities match the constant fluid viscosities, $\mu$ and $\lambda$, while within the rigid body domain, they tend to infinity as $n$ tends to infinity. Moreover, we denote $\field{S}_{\chi,n}(\u) = 2\mu_n(\chi)\field{D}(\u)+\lambda_n(\chi)\Div\u\Id$.

Referring to \cite{ChJiNo19} and \cite{Feir03}, the starting point of this scheme is then a problem with all the approximation parameters:
\begin{align}
\partial_t\ro - \epsil\Delta\ro  + \Div(\ro\u)=&\:0 \text{ in }Q_T\label{CEapprox}\\
\ro(0) = &\:\ro_{0,\delta} \text{ in }\Omega 
\end{align}
\begin{equation}
(-\epsil \nabla\ro + \ro\u)\cdot \en = \begin{cases} \ro_B\uB\cdot\en & \text{ on } (0,T)\times \Gamma_{in}\\
\ro\uB\cdot\en &\text{ on }(0,T)\times \Gamma_{out}
\end{cases}
\label{BCCEapprox}
\end{equation}
\begin{equation}
\partial_t(\ro\u) + \Div(\ro\u\otimes\u) + \nabla p_\delta(\ro)  = \Div \field{S}_{\chi,n}(\u)
-\epsil \nabla\ro\cdot\nabla\u  \text{ in }Q_T 
\end{equation}
\begin{equation}
 \u(0) = \u_0 \text{ in }\Omega , \qquad \u = \uB \text{ on }(0,T)\times\partial \Omega, \label{BCapprox}\end{equation}
 where $\ro_{0,\delta}\in C^{1}\bigl(\overline{\Omega}\bigr)$ is a suitable regularization of the initial datum $\ro_0$ such that
 $$0< \delta\leq\ro_{0,\delta}(\vecx)\leq\frac{1}{\delta},\qquad  \forall \vecx \in\Omega.$$
In contrast to \cite{ChJiNo19}, we do not introduce regularizing term $\epsil\Div\bigl( |\nabla(\u-\uin)|^2\nabla (\u-\uin)\bigr)$  to the momentum equation. However, inspired by \cite{FeNo22} in order to guarantee the required regularity of boundary data, we approximate condition \refx{BCCEapprox} further on the Galerkin level. Note that we do not need any compatibility condition involving given initial and boundary velocity, as initial velocity $\u_0\in L^2(\Omega)$ has no trace.

\subsubsection*{General strategy}
Our general strategy to prove Theorem \ref{main} will follow the steps below:
\begin{enumerate}
\item The existence for fixed $\delta,\epsil,\reg, n$ 
is established via Galerkin method in the spirit of Section 4 of \cite{ChJiNo19}, the main difference in the result lies in the fact that we need to treat the non-constant viscosities.
\item  We identify the rigid body by solidification of the appropriate part of the fluid. More precisely, we pass with $n\to\infty$ and simultaneously achieve $\chi$ to be connected to the velocity regularisation $R_r[\u]$ via version of Proposition 5.1 from \cite{Feir03}.  
\item We pass with $\epsil\to 0$ to get rid of the artificial mass diffusion.
\item We pass with $\delta \to 0$ to obtain a solution to the original problem.
\end{enumerate}

\subsubsection*{Definition of weak solutions to the approximate problem}
We define weak solutions to the approximate problem as follows.
\begin{defin} \label{defapprox}
We say that pair $(\ro, \u)$ is a solution to the approximate problem \refx{CEapprox}-\refx{BCapprox} if  $\uin$ is an~extension of the boundary data $\uB$ from Lemma \ref{extend} and
\begin{enumerate}
\item    
    $\ro\geq0 \text{ a.e. in }Q_T,\quad \ro\in L^{\infty}\bigl(0,T;L^{\beta}(\Omega)\bigr)  \cap  L^2\bigl(0,T;W^{1,2}(\Omega)\bigr),\quad   \ro \in C_{\text{weak}}([0,T];L^\beta(\Omega))$,\\
    $\u\in 
    L^2\bigl(0,T;W^{1,2}(\Omega)\bigr), \quad \ro\u \in C_{\text{weak}}([0,T];L^1(\Omega)),    $
\item functions $(\ro,\u)$ satisfy the continuity equation
\begin{multline}
\int_{\Omega}{(\ro\psi)(\tau,.)}\dx - \int_{\Omega}{\ro_{0,\delta}(.)\psi(0,.)}\dx + \epsil \int_0^\tau \int_{\Omega}\nabla\ro\cdot \nabla\psi \dx\dt +  \int_0^\tau\int_{\Gamma_{in}}\ro_B\uB\cdot\en \psi \de{S}\dt \\= \int_{0}^\tau \int_{\Omega} \bigl(\ro\partial_t \psi +\ro\u \cdot\nabla\psi  \bigr) \dx\dt ,\quad
\forall\tau\in[0,T],\:\forall \psi \in C^1_{c}\bigl([0,T]\times(\Omega\cup\Gamma_{in})\bigr)\label{CEeps}
\end{multline}
as well as its renormalized version
\begin{multline}
\int_{\Omega}{(b(\ro)\psi)(\tau,.)}\dx - \int_{\Omega}{b(\ro_{0,\delta})(.)\psi(0,.)}\dx + \int_0^\tau\int_{\partial\Omega}\Bigl(b(\ro)\uB-\epsil b'(\ro)\nabla \ro\Bigr)\cdot\en \psi \de{S}\dt \\
 = \int_{0}^\tau \int_{\Omega} \Bigl(b(\ro)\partial_t \psi +\bigl(b(\ro) \u -\epsil b'(\ro) \nabla \ro \bigr) \cdot\nabla\psi -  \psi\bigl(b'(\ro)\ro-b(\ro)\bigr)\Div\u   - \psi \epsil b''(\ro)|\nabla\ro|^2 \Bigr) \dx\dt \\
\forall\tau\in[0,T],\:\forall \psi \in C^1\bigl([0,T]\times\overline{\Omega}\bigr), \forall b\in C^2\bigl([0,\infty)\bigr) \text{ with } b'\in C^1_c\bigl([0,\infty)\bigr),\label{RCEeps}
\end{multline}
\item the weak variational formulation of the momentum equation holds true
\begin{multline}
\int_{\Omega}{(\ro\u\cdot\fib)(\tau,.)}\dx - \int_{\Omega}{\vektor{m}_0(.)\cdot\fib(0,.)}\dx
=  - \epsil \int_{0}^\tau \int_{\Omega} \bigl(  \nabla \ro\cdot \nabla \u \cdot \fib\bigr) \dx\dt\\ + \int_{0}^\tau \int_{\Omega} \Bigl(\ro\u \cdot\partial_t \fib +(\ro\u\otimes\u) :\field{D}(\fib)  + p_\delta(\ro)\Div\fib -\field{S}_{\chi,n}(\u):\field{D}(\fib) \Bigr) \dx\dt \\
\forall\tau\in[0,T],\:\forall \fib \in  C^{\infty}_c(Q_T),
\end{multline}
\item the energy inequality 
reads
\begin{equation}
\begin{split}
&\int_\Omega \Bigl(\frac12\ro|\u-\uin|^2 + P_\delta(\ro) \Bigr)(\tau)\dx  + \epsil  \int_0^\tau\int_\Omega  P''_\delta(\ro)|\nabla\ro|^2 \dx \dt\\
&\qquad+ \int_0^\tau\int_\Omega \bigl(\field{S}_{\chi,n}(\u-\uin) \bigr): \field D (\u - \uin)\dx \dt +   \int_0^\tau\int_{\Gamma_{out}} P_\delta(\ro)\uB\cdot\en  \,\de{S} \dt  \\
&\qquad+\int_0^\tau\int_{\Gamma_{in}}\bigl( P_\delta(\ro_B) - P'_\delta(\ro) (\ro_B -\ro) - P_\delta(\ro)\bigr)|\uB\cdot\en | \,\de{S} \dt\\
&\qquad\qquad\leq \int_\Omega \Bigl(\frac12\ro_{0,\delta}|\u_0-\uin|^2 + P_\delta(\ro_{0,\delta}) \Bigr)\dx -\int_0^\tau\int_\Omega \ro\u\cdot\nabla\uin \cdot (\u-\uin)  \dx \dt\\
&\qquad\qquad\qquad- \int_0^\tau \int_\Omega  p_\delta(\ro)\Div\uin \dx \dt -\int_0^\tau\int_{\Gamma_{in}} P_\delta(\ro_B)\uB\cdot\en  \,\de{S} \dt\\
&\qquad\qquad\qquad - \int_0^\tau\int_\Omega  \field{S}_{\chi,n}(\uin): \field D (\u - \uin) \dx \dt+ \epsil \int_0^\tau \int_\Omega \nabla \ro \cdot\nabla (\u-\uin)\cdot\uin \,\dx\dt.
\label{EIeps}
\end{split}
\end{equation}
\end{enumerate}
\end{defin}

\begin{remark}
The energy inequality \eqref{EIeps} is obtained formally by taking $\fib = \u-\uin$ in the momentum equation, $\psi =\dfrac{|\uin|^2-|\u|^2}{2}$ in the continuity equation and $b(\ro) =  P_\delta(\ro)$ and $\psi = 1$ in the renormalized continuity equation.
\end{remark}

Let us note that in fact on $\Gamma_{in}$, we have $|\uB\cdot \en| = -\uB\cdot \en.$ 
Furthermore, due to the convexity of $P_\delta$,
we have $\bigl( P_\delta(\ro_B) - P'_\delta(\ro) (\ro_B -\ro) - P_\delta(\ro)\bigr)\geq0$, thus we can infer
\begin{equation}
\begin{split}\label{ei1}
&\int_\Omega \Bigl(\frac12\ro|\u-\uin|^2 + P_\delta(\ro) \Bigr)(\tau)\dx  + \epsil  \int_0^\tau\int_\Omega   P''_\delta(\ro)|\nabla\ro|^2 \dx \dt\\
&\qquad+ \int_0^\tau\int_\Omega \bigl(  2\mu_n(\chi)|\field{D}(\u-\uin)|^2+\lambda_n(\chi)\bigl(\Div(\u-\uin)\bigr)^2\dx \dt +   \int_0^\tau\int_{\Gamma_{out}} P_\delta(\ro)\uB\cdot\en  \,\de{S} \dt  \\
&\qquad\qquad\leq \int_\Omega \Bigl(\frac12\ro_{0,\delta}|\u_0-\uin|^2 + P_\delta(\ro_{0,\delta}) \Bigr)\dx -\int_0^\tau\int_\Omega \ro\u\cdot\nabla\uin \cdot (\u-\uin)  \dx \dt\\
&\qquad\qquad\qquad- \int_0^\tau \int_\Omega  p_\delta(\ro)\Div\uin \dx \dt -\int_0^\tau\int_{\Gamma_{in}} P_\delta(\ro_B)\uB\cdot\en  \,\de{S} \dt\\
&\qquad\qquad\qquad - \int_0^\tau\int_\Omega  \field{S}_{\chi,n}(\uin): \field D (\u - \uin) \dx \dt+ \epsil \int_0^\tau \int_\Omega \nabla \ro \cdot\nabla (\u-\uin)\cdot\uin \,\dx\dt.
 \end{split}
\end{equation}

\subsection{Existence of the solution to the approximate problem}

The existence of the solution to the approximate problem defined in the previous section can be derived in a similar way as in \cite{ChJiNo19}. The main differences are the presence of non-constant viscosities, the lack of regularization of the velocity gradient, and a different approach to the boundary condition for the continuity equation. 

As for the latter, we introduce as in \cite{FeNo22} a sequence of smooth functions $[\,\cdot\,]_N^{-}\in C^{\infty}(\R)$ approximating the function of negative part, namely
$$[v]_N^{-} \leq \min\{v, 0\}, \quad [v]_N^{-}=\begin{cases} v &\text{ for } v\leq - \frac{1}{N}\\
\text{increasing}&\text{ for }v\in\bigl[ -\frac{1}{N}, \frac{1}{N} \bigr]\\
0 &\text{ for } v\geq\frac{1}{N}
\end{cases}
$$
where $N$ denotes the dimension of the finite dimensional space for the Galerkin approximation. The continuity equation is then accompanied with regularized boundary condition
$$\epsil\nabla\ro\cdot\en = (\ro-\ro_B) [\uB\cdot \en]_N^{-}\text{ on }\partial\Omega$$
and regularized initial condition $\ro_{0,\delta,N}$ satisfying also the compatibility condition
$$\epsil\nabla\ro_{0,\delta,N}\cdot\en = (\ro_{0,\delta,N}-\ro_B) [\uB\cdot \en]_N^{-}\text{ on }\partial\Omega,$$
see \cite[Section 5.1.5]{FeNo22} for a precise construction of such $\ro_{0,\delta, N}.$ This enables the classical parabolic existence theory to be applied.

As in \cite{Feir03}, we allow the viscosities to depend smoothly on $\chi\in\R$	
$$\mu_n = \mu_n\bigl(\chi\bigr),\:\lambda_n = \lambda_n\bigl(\chi\bigr),$$	
where $\chi$ serves as an indicator that allows us to determine whether we are within the approximate fluid region, where the viscosity coefficients remain constant, or within the approximate solid region, where these coefficients are defined to grow arbitrarily large.
Then, we can proceed with the proof using the methodology employed in \cite[Section 4.3]{ChJiNo19}.  Importantly, the introduction of the additional variable $\chi$ does not require any substantial modifications to the approach.
Therefore, we only restate the result and add a few comments on the proof.

\begin{lem} \label{existapprox}
Let $\Omega\subset \R ^3$. Suppose that there exist positive numbers $0<\underline{\ro}<\overline{\ro}$ such that the initial and boundary data satisfy
$$\u_0 \in L^2(\Omega),\quad   \ro_{0,\delta} \in W^{1,2}(\Omega),\quad\ro_B\in C(\partial\Omega),\quad \ro_{0,\delta},\ro_B\in\Bigl[\underline{\ro},\overline{\ro}\Bigr] ,  $$ then for $\uin$ an~extension of the boundary data $\uB$ from Lemma \ref{extend} there exists $\epsil_0>0$ and a weak solution to the approximating problem in the sense of Definition \ref{defapprox}\ref{defapprox}, for all $\epsil\in(0,\epsil_0)$, $\delta\in(0,1)$. Moreover, assuming
\begin{align}
&\mu_n(\chi)\geq\mu_0>0,\quad
\mu_n(\chi)+\lambda_n(\chi)\geq 0
\\
&\mu_n(\chi)=\mu_0,\, \lambda_n(\chi)=\lambda_0 \quad\text{on }\supp{\uin}
\label{collision1}
\end{align}
for some constants $\mu_0>0$, $\lambda_0>0$, the solution satisfies the following estimates:
\begin{align}
	&\norm{\ro\abs{\u-\uin}^2}_{L^{\infty}(0,T;L^1(\Omega))}
	\leq c(data,\mu_0,T),
	\\
	&\sqrt[\beta]{\delta}\norm{\ro}_{L^{\infty}(0,T;L^{\beta}(\Omega))}
	\leq c(data,\mu_0,T),
	\\
	&\norm{\ro}_{L^{\infty}(0,T;L^{2}(\Omega))}
	\leq c(data,\delta,\mu_0,T),
	\\
	&\sqrt{\epsil\delta}\norm{\nabla\bigl(\ro^{\frac{\beta}{2}}\bigr)}_{L^{2}(0,T;L^{2}(\Omega))}
	\leq c(data,\mu_0,T),
	\\
	&\sqrt{\epsil}\norm{\nabla\ro}_{L^{2}(0,T;L^{2}(\Omega))}
	\leq c(data,\delta,\mu_0,T),
	\\
	&\norm{\u-\uin}_{L^{2}(0,T;W^{1,2}(\Omega))}
	\leq c(data,\mu_0,T),
	\\
	&\sqrt[\beta]{\delta}\norm{\ro\abs{\uB\cdot\en}^{\frac{1}{\beta}}}_{L^{\beta}(0,T;L^{\beta}(\partial\Omega))}
	\leq c(data,\mu_0,T),
	\\
	&\norm{\ro\abs{\uB\cdot\en}^{\frac{1}{2}}}_{L^{2}(0,T;L^{2}(\partial\Omega))}
	\leq c(data,\delta,\mu_0,T),
	\end{align}
	and since $\uin\in W^{1,\infty}(\Omega)$
	\begin{align}
	&\norm{\ro\abs{\u}^2}_{L^{\infty}(0,T;L^1(\Omega))}
	\leq c(data,\mu_0,T).
\end{align}

\end{lem}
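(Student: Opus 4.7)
The plan is to follow the three-step Faedo-Galerkin construction of \cite[Sec.~4]{ChJiNo19}, with the boundary regularization of the continuity equation taken from \cite[Sec.~5.1.5]{FeNo22}. Since $\chi$, and hence $\mu_n(\chi)$ and $\lambda_n(\chi)$, enters as a prescribed bounded coefficient with $\mu_n\geq\mu_0>0$, the dissipative operator $\fib\mapsto -\Div\field{S}_{\chi,n}(\fib)$ remains coercive on $W^{1,2}_0(\Omega)$ by Korn's inequality, and the variable-viscosity setting does not alter the structure of the argument.

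Concretely, I fix a basis $\{\vektor{w}_k\}$ of $W^{1,2}_0(\Omega;\R^3)$ and look for an approximate velocity in the form $\u_N = \uin + \sum_{k=1}^N c_k(t)\vektor{w}_k$. For given $\u_N$, the linear parabolic problem \refx{CEapprox} with the regularized boundary condition
$$\epsil\nabla\ro\cdot\en=(\ro-\ro_B)[\uB\cdot\en]_N^{-}\text{ on }\partial\Omega$$
and regularized initial data $\ro_{0,\delta,N}$ admits, by classical parabolic theory, a unique classical solution $\ro=\ro[\u_N]\in C([0,T];C^{2,\alpha}(\overline\Omega))$; the minimum principle combined with $\ro_B\geq\underline{\ro}>0$ guarantees $\ro[\u_N]\geq\underline{\ro}_\star>0$. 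Plugging $\ro[\u_N]$ into the Galerkin projection of the momentum equation produces an ODE system for $(c_k)_{k=1}^N$ that is solved locally in time via Schauder's fixed-point theorem. The Galerkin analogue of the energy identity then extends the solution to $[0,T]$.

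The identity is obtained by testing the Galerkin momentum equation with $\u_N-\uin$, the renormalized continuity equation with $b=P_\delta$ and $\psi\equiv 1$, and the continuity equation itself with $\psi=\tfrac12\bigl(|\uin|^2-|\u_N|^2\bigr)$; after cancellation it coincides with \refx{EIeps}. Convexity of $P_\delta$ ensures the $\Gamma_{in}$-term on the left is non-negative, and a Gronwall argument, in which the only genuinely indefinite right-hand side term $\epsil\nabla\ro\cdot\nabla(\u-\uin)\cdot\uin$ is absorbed into dissipation and the $\epsil P_\delta''(\ro)|\nabla\ro|^2$-contribution by Cauchy-Schwarz, delivers the listed estimates: the kinetic energy yields the bound on $\ro|\u-\uin|^2$, the pressure term the $\delta^{1/\beta}$-weighted $L^\infty L^\beta$ bound on $\ro$ and the $L^\infty L^2$ bound with a $\delta$-dependent constant, the term $\epsil P_\delta''(\ro)|\nabla\ro|^2$ the two parabolic bounds on $\sqrt{\epsil\delta}\,\nabla(\ro^{\beta/2})$ and $\sqrt{\epsil}\,\nabla\ro$, the dissipation together with Korn's inequality on $W^{1,2}_0(\Omega)$ the $L^2(0,T;W^{1,2})$-bound on $\u-\uin$, and the $\Gamma_{out}$-term the boundary $L^\beta$-bound weighted by $|\uB\cdot\en|^{1/\beta}$.

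The main obstacle is the passage $N\to\infty$. The regularization $[\uB\cdot\en]_N^{-}$ is designed so that the continuity equation at fixed $N$ fits into classical parabolic theory, but its removal requires strong compactness of $\ro_N$ up to the boundary in order to identify the Robin-type limit condition $\epsil\nabla\ro\cdot\en=(\ro-\ro_B)(\uB\cdot\en)^{-}$, and hence the trace $\int_{\Gamma_{in}}\ro_B\uB\cdot\en\,\psi$ appearing in \refx{CEeps}. This is achieved from the $\sqrt{\epsil}$-weighted $H^1$-bound on $\ro_N$, a time-regularity bound on $\partial_t\ro_N$ read off the equation, Aubin-Lions, and a trace argument following \cite[Sec.~5.1.5]{FeNo22}. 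Once strong convergence of $\ro_N$ in $L^2(Q_T)$ is available, the passage to the limit in the nonlinear terms is routine, since the test functions are fixed basis elements and a similar Aubin-Lions argument on the momentum equation gives strong $L^2$-convergence of $\u_N$. Identifying the boundary term of the continuity equation is the only genuinely new item beyond \cite{ChJiNo19}, and it is what the approximation scheme inherited from \cite{FeNo22} is designed to handle.
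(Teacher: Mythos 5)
Your overall scheme---the Faedo--Galerkin construction following \cite{ChJiNo19} with the boundary regularization of \cite{FeNo22}, the energy identity obtained from exactly the three test functions you list, and a Gronwall argument---is the route the paper takes (the paper only sketches the construction by reference and writes out the a priori estimates in detail). Two points in your derivation of the estimates, however, do not go through as written. First, you treat $\mu_n(\chi)$, $\lambda_n(\chi)$ as ``prescribed bounded coefficients'' and never invoke hypothesis \eqref{collision1}. The constants in the lemma are $c(data,\mu_0,T)$, i.e.\ uniform in $n$, and the right-hand side of the energy inequality contains $\int_0^\tau\int_\Omega \field{S}_{\chi,n}(\uin):\field{D}(\u-\uin)\,\dx\dt$, whose size is governed by $\|\field{S}_{\chi,n}(\uin)\|_{L^2(\Omega)}$; since $\mu_n=\mu+nH(\chi_n+\reg)\xi$, this is of order $n$ unless $\mu_n=\mu_0$, $\lambda_n=\lambda_0$ on $\supp\uin$. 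Condition \eqref{collision1} is precisely what neutralizes this term, and it is the one place where the variable viscosities genuinely affect the argument; omitting it undercuts your claim that they ``do not alter the structure.''

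Second, you attribute the estimates on $\sqrt{\epsil}\norm{\nabla\ro}_{L^2L^2}$ and on $\norm{\ro|\uB\cdot\en|^{1/2}}_{L^2(0,T;L^2(\partial\Omega))}$ to the energy inequality. The term $\epsil\int P_\delta''(\ro)|\nabla\ro|^2$ does not control $\epsil\norm{\nabla\ro}_{L^2}^2$: one has $P_\delta''(\ro)=a\gamma\ro^{\gamma-2}+\delta\beta\ro^{\beta-2}$, which vanishes as $\ro\to0$ whenever $\gamma>2$, so there is no lower bound $P_\delta''\geq c(\delta)>0$; the same objection applies to your plan to absorb the indefinite term $\epsil\nabla\ro\cdot\nabla(\u-\uin)\cdot\uin$ partly into that quantity. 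Moreover, the boundary $L^2$ trace estimate simply does not occur in the energy balance (only the $\Gamma_{out}$ term with $P_\delta(\ro)$ does, which yields the $\beta$-weighted bound). The paper obtains these estimates by additionally testing the continuity equation with $\psi=\ro$, which produces $\tfrac12\int_\Omega\ro^2(\tau)+\tfrac12\int_0^\tau\int_{\partial\Omega}\ro^2|\uB\cdot\en|+\epsil\int_0^\tau\int_\Omega|\nabla\ro|^2$ on the left-hand side; this identity (combined with the energy inequality at the Galerkin level, where all quantities are finite, so that the terms $\alpha\epsil\norm{\nabla\ro}^2$ and $\alpha\norm{\nabla(\u-\uin)}^2$ can be absorbed without circularity) is a component of the proof your proposal is missing. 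The $L^\infty(0,T;L^2(\Omega))$ bound on $\ro$ alone would indeed follow from the $L^\beta$ bound by H\"older with a $\delta$-dependent constant, but the gradient and trace estimates do not.
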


\subsubsection{Estimates independent of $\mu_n(\chi)$ and $\lambda_n(\chi)$}
The proof of Lemma \ref{existapprox} is essentially the same as in \cite[Section 4.3]{ChJiNo19}, with the only distinction lying in the variable viscosities. Therefore, here we will only present the derivation of the estimates independent of the choice of the non-constant viscosities. 
In doing so, we will use the continuity equation and the energy inequality, which, at the level of Galerkin approximations, have the same form as at the limit, i.e., in the statement of Lemma \ref{existapprox}.

Let us derive the estimates independent of the choice of the non-constant viscosities as given in Lemma~\ref{existapprox}.

We deduce from the Korn and the Poincar\'e inequalities
\begin{multline}
\int_\Omega \bigl(  2\mu_n(\chi)|\field{D}(\u-\uin)|^2+\lambda_n(\chi)\bigl(\Div(\u-\uin)\bigr)^2\dx
\\
=
\int_\Omega \Bigl(
\underbrace{\mu_n(\chi)}_{\geq\mu_0}|\field{D}(\u-\uin)|^2
+\underbrace{(\mu_n(\chi) + \lambda_n(\chi))}_{\geq 0}\left(\Div(\u-\uin)\right)^2
\\
+ \underbrace{\mu_n(\chi)}_{>0}\underbrace{\left(|\field{D}(\u-\uin)|^2 -\left(\Div(\u-\uin)\right)^2\right)}_{\geq 0}
\Bigr)\dx
\\
\geq c\mu_0 \|\u-\uin\|_{W^{1,2}(\Omega)}^2.
\end{multline}

For the right-hand side of \eqref{ei1} we obtain
\begin{align}
\left|
\int_{0}^{\tau} \int_{\Omega} p_{\delta}\left(\ro\right) \operatorname{div} \mathbf{u}_{\infty} \dx \dt
\right|
&\leq c(data)\int_{0}^{\tau} \int_{\Omega}
P_{\delta}(\ro) \dx \dt
\\
-\int_0^\tau\int_{\Gamma_{in}} P_\delta(\ro_B)\uB\cdot\en  \,\de{S} \dt\,\,
&
\leq c(data,T), \qquad \text{for } \delta<1
\\
\left|
\int_{0}^{\tau} \int_{\Omega} \ro \u \cdot \nabla \uin \cdot(\u-\uin) \dx \dt
\right|
&\leq \int_{0}^{\tau} \int_{\Omega} \ro |\nabla \uin| |\u-\uin|^2 \dx \dt
\\\nonumber
&\qquad+ \int_{0}^{\tau} \int_{\Omega} \ro |\uin| |\nabla \uin ||\u-\uin| \dx \dt
\\\nonumber
&\leq c(data) \int_{0}^{\tau} \int_{\Omega} 
	\left(
	\ro|\u-\uin|^2 + \ro
	\right)
	\dx \dt
\\\nonumber
&\leq c(data) \int_{0}^{\tau} \int_{\Omega} 
	\left(
	\ro|\u-\uin|^2 + P_{\delta}(\ro)
	\right)
	\dx \dt
	+ c(data, T)
\\
\left|
\epsil \int_{0}^{\tau} \int_{\Omega} \nabla \ro \cdot \nabla(\u-\uin) \cdot \uin \dx \dt
\right|
&\leq
\alpha\epsil\|\nabla\ro\|_{L^2((0,\tau)\times\Omega)}^2
+ \epsil\frac{c(data)}{\alpha}
\|\nabla(\u-\uin)\|_{L^2((0,\tau)\times\Omega)}^2
\\
\left|
\int_{0}^{\tau} \int_{\Omega} \field S\left( \uin\right): \nabla(\u-\uin) \dx\dt
\right|
&\leq \int_{0}^{\tau} \int_{\Omega} |\field S\left(\uin\right)||\nabla(\u-\uin)| \dx\dt
\\\nonumber
&\leq \alpha \|\nabla(\u-\uin)\|_{L^2((0,\tau)\times\Omega)}^2
+ \frac{c(data,T)}{\alpha}
\end{align}
for arbitrary $\alpha>0$, since 
\begin{align*}
\int_{\Omega}|\field S\left( \uin\right)|^2\dx
&= \int_{\Omega}|2\mu_n(\chi)\field D\left(\nabla \uin\right) + \lambda_n(\chi)\Div\uin\,\Id|^2\dx
\\
&= \int_{\Omega}|2\mu_0\field D\left(\nabla \uin\right) + \lambda_0\Div\uin\,\Id|^2\dx
\leq c(data)
\end{align*}
by condition \refx{collision1}.
Thus, the right-hand side of \eqref{ei1} is bounded by
\begin{multline}
c(data) \int_{0}^{\tau} \int_{\Omega} 
\left(
\frac{1}{2}\ro|\u-\uin|^2 + P_{\delta}(\ro)
\right)\,\dx\dt
+\alpha\epsil\|\nabla\ro\|_{L^2((0,\tau)\times\Omega)}^2
\\
+ \left(
\alpha 
+ \epsil\frac{c(data)}{\alpha}
\right)
\|\nabla(\u-\uin)\|_{L^2((0,\tau)\times\Omega)}^2
+ \frac{c(data, T)}{\alpha} + c(data, T).
\end{multline}
All together, gives us
\begin{multline}
\int_\Omega \Bigl(\frac12\ro|\u-\uin|^2 + P_\delta(\ro) \Bigr)(\tau)\dx  
+ \epsil  \int_0^\tau\int_\Omega  P''_\delta(\ro)|\nabla\ro|^2\dx \dt\\
+ c\mu_0 \|\u-\uin\|_{L^2(0,\tau;W^{1,2}(\Omega))}^2
+ \int_0^\tau\int_{\Gamma_{out}} P_\delta(\ro)\uB\cdot\en  \,\de{S} \dt 
\\
\leq c(data) \int_{0}^{\tau} \int_{\Omega} 
\left(
\frac{1}{2}\ro|\u-\uin|^2 + P_{\delta}(\ro)
\right)\,\dx\dt
+\alpha\epsil\|\nabla\ro\|_{L^2((0,\tau)\times\Omega)}^2
\\
+ \left(
\alpha 
+ \epsil\frac{c(data)}{\alpha}
\right)
\|\nabla(\u-\uin)\|_{L^2((0,\tau)\times\Omega)}^2
+ \frac{c(data, T)}{\alpha} + c(data, T).
\end{multline}
Now, we take first $\alpha>0$ sufficiently small and  then $\varepsilon>0$ sufficiently small
\begin{multline}
\int_\Omega \Bigl(\frac12\ro|\u-\uin|^2 + P_\delta(\ro) \Bigr)(\tau)\dx  
+ \epsil  \int_0^\tau\int_\Omega   P''_\delta(\ro)|\nabla\ro|^2\dx \dt\\
+ c(data,\mu_0,T) \|\u-\uin\|_{L^2(0,\tau;W^{1,2}(\Omega))}^2 +   \int_0^\tau\int_{\Gamma_{out}} P_\delta(\ro)\uB\cdot\en  \,\de{S} \dt 
\\
\leq c(data) \int_{0}^{\tau} \int_{\Omega} 
\left(
\frac{1}{2}\ro|\u-\uin|^2 + P_{\delta}(\ro)
\right)\dx\dt
+ c(data,\mu_0, T).
\end{multline}
By the Gronwall inequality we get
\begin{multline}
\int_\Omega \Bigl(\frac12\ro|\u-\uin|^2 + P_\delta(\ro) \Bigr)(\tau)\dx  
+ \epsil  \int_0^\tau\int_\Omega   P''_\delta(\ro)|\nabla\ro|^2\\
+ c(data,\mu_0,T) \|\u-\uin\|_{L^2(0,\tau;W^{1,2}(\Omega))}^2 +   \int_0^\tau\int_{\Gamma_{out}} P_\delta(\ro)\uB\cdot\en  \,\de{S} \dt 
\leq c(data,\mu_0,T).
\end{multline}
Thus, we obtain the following estimate
\begin{multline}
\norm{\ro\abs{\u-\uin}^2}_{L^{\infty}(0,T;L^1(\Omega))} 
+ \norm{\u-\uin}_{L^{2}(0,T;W^{1,2}(\Omega))}
\\ 
+\sqrt[\beta]{\delta}\norm{\ro}_{L^{\infty}(0,T;L^{\beta}(\Omega))}
+   \sqrt{\epsil\delta}\norm{\nabla\bigl(\ro^{\frac{\beta}{2}}\bigr)}_{L^{2}(0,T;L^{2}(\Omega))} 
\\
+    \sqrt[\beta]{\delta}\norm{\ro\abs{\uB\cdot\en}^{\frac{1}{\beta}}}_{L^{\beta}(0,T;L^{\beta}(\partial\Omega))}
\leq  c(data,\mu_0,T)
\label{esteps}
\end{multline}
and since $\uin\in W^{1,\infty}(\Omega)$
\begin{equation}
\norm{\ro\abs{\u}^2}_{L^{\infty}(0,T;L^1(\Omega))}
+\norm{\u}_{L^{2}(0,T;W^{1,2}(\Omega))}
\leq c(data,\mu_0,T).\label{esteps2}
\end{equation}
The strong convergence of the gradient of density can be deduced following the same lines as in \cite{FeNo22}.

From the continuity equation (by taking $\psi=\ro$) we obtain for any $\tau\in[0,T]$
	\begin{multline}
	\frac12\int_{\Omega}\ro^2(\tau)\,\dx
	+\frac12\int_{0}^{\tau}\int_{\partial\Omega}\ro^2\abs{\uB\cdot\en}\,\de{S}\dt
	+\epsil \int_{0}^{\tau}\int_{\Omega} \abs{\nabla\ro}^2\,\dx\dt
	\\
	=\int_{\Omega}\frac12\ro_{0,\delta}^2\,\dx
	+ \int_0^\tau\int_{\Gamma_{in}} \ro\ro_B\abs{\uB\cdot\en}\,\de{S}\dt
	-\frac12\int_{0}^{\tau}\int_{\Omega}\ro^2\Div\u\,\dx\dt.
	\end{multline}
We estimate the right-hand side
	\begin{align*}
	\int_0^\tau\int_{\Gamma_{in}} \ro\ro_B\abs{\uB\cdot\en}\,\de{S}\dt
	& \leq \int_0^\tau\int_{\Gamma_{in}}
	\alpha\ro^2\abs{\uB\cdot\en}+
	\frac{c}{\alpha}\ro_B^2\abs{\uB\cdot\en}
	\,\de{S}\dt
	\nonumber\\
	& \leq \alpha\int_0^\tau\int_{\partial\Omega}
	\ro^2\abs{\uB\cdot\en}\,\de{S}\dt +
	\frac{c(data)}{\alpha},
	\end{align*}
	\begin{align*}
	\abs{\frac12\int_{0}^{\tau}\int_{\Omega}\ro^2\Div\u\,\dx\dt}
	&\leq c(data) \int_{0}^{\tau} \int_{\Omega}
	\frac12\ro^2
	\dx \dt
	+ \int_{0}^{\tau} \int_{\Omega}
	\ro^2\Div(\u-\uin)
	\dx \dt
	\nonumber\\
	&\leq c(data) \int_{0}^{\tau} \int_{\Omega}
	\frac12\ro^2
	\dx \dt
	\nonumber\\
	&\qquad+ c(data)\int_{0}^{\tau} \int_{\Omega}
	\ro^4
	\dx \dt
	+\alpha \|\nabla(\u-\uin)\|_{L^2((0,\tau)\times\Omega)}^2
	\nonumber
	\\
	&\leq c(data) \int_{0}^{\tau} \int_{\Omega}
	\frac12\ro^2
	\dx \dt
	\nonumber\\
	&\qquad
	+ c(data,\delta)\int_{0}^{\tau} \int_{\Omega}
	P_{\delta}(\ro)
	\dx \dt
	+ \alpha \|\nabla(\u-\uin)\|_{L^2((0,\tau)\times\Omega)}^2
	\nonumber\\
	&\leq c(data,\alpha,\delta,\mu_0,T),
	\end{align*}
from which it follows
	\begin{multline*}
	\frac12\int_{\Omega}\ro^2(\tau)\,\dx
	+\frac12\int_{0}^{\tau}\int_{\partial\Omega}\ro^2\abs{\uB\cdot\en}\,\de{S}\dt
	+\epsil \int_{0}^{\tau}\int_{\Omega} \abs{\nabla\ro}^2\,\dx\dt
	\\
	\leq
	\alpha\int_0^\tau\int_{\partial\Omega}
	\ro^2\abs{\uB\cdot\en}\,\de{S}\dt
	+ c(data,\alpha,\delta,\mu_0,T).
	\end{multline*}
Finally, for $\alpha>0$ sufficiently small, we obtain
	\begin{equation}\label{esteps3}
	\norm{\ro}_{L^{\infty}(0,T;L^{2}(\Omega))}
	+\sqrt{\epsil}\norm{\nabla\ro}_{L^{2}(0,T;L^{2}(\Omega))}
	+\norm{\ro\abs{\uB\cdot\en}^{\frac{1}{2}}}_{L^{2}(0,T;L^{2}(\partial\Omega))}
	\leq c(data,\delta,\mu_0,T).
	\end{equation}

\begin{remark}
Let us note that in contrast to \cite{Feir03} but in correspondence to \cite{ChJiNo19}, we obtain for positive $\epsil$ estimates which depend also on $\delta.$
\end{remark}

\subsection{Solidification}
Let $h>0$ such that $\dist(S,\partial \Omega)>h$, and let us define the interior part of the solid as follows
$$
O=\{ \vecx\in S| \dist (\vecx,\partial S ) > r \},
\qquad r>0 \text{ small enough}.
$$
Next, we introduce a sequence of functions $\eta_n$ representing regularized deformations of the rigid body
given by
\begin{equation}
\dfrac{\de{}}{\dt} \eta_n[t](\vecy)=R_{r}[\u_n]\bigl(t,\eta_n[t](\vecy)\bigr),
\qquad \eta_n[0](\vecy)=\vecy,
\end{equation}
and a sequence of sets defined by
\begin{equation*}
O_n(t)=\eta_n[t](O).
\end{equation*}
Here, the regularization $R_{r}[\u_n]$ is defined using convolution
$$R_{r}[\u_n] (t,\vecx) 
= (\u_n*\omega_r)(t,\vecx) 
= \int_{\R^3}\omega_r (\vecx-\vecy) {\u_n}(t,\vecy) \de{\vecy},$$
where $\omega_r\in C^\infty(\R^3)$ is non-negative, radially symmetric and radially non-increasing function such that $\supp \omega_r \subset B_{r}(0)$ and $\int_{\R^3}\omega_r \dx =1$.

To further describe our construction, we introduce the signed distance function from the boundary of the set $S\subset\mathbb{R}^3$ as follows
$$\db_S(\vecx) = \dist\big(\vecx, \overline{\R^3\setminus S}\big)
-\dist\big(\vecx, \overline{S}\big),
$$
where for closed non-empty set $F\subset\R^3$ we denote the distance
$\dist(\vecx, F) = \min_{\vecy\in F}|\vecx-\vecy|.$
Now, we define $$\chi_n= \db_{O_n(t)}(\vecx).$$
It is possible to choose $\uin$ in such a way that it satisfies the condition:
$
\supp\uin\subset U_{\frac{1}{2}h}
= \{ \vecx\in \R^3| \dist (\vecx,\partial\Omega ) < \frac{h}{2} \},
$
and then define
\begin{equation} 	\label{artviscosity}
\mu_n (\chi_n) =\mu + n H(\chi_n+\reg)\xi \text{ and }\lambda_n(\chi_n)=\lambda +n H(\chi_n+\reg)\xi
\end{equation}
with $H$ a smooth non-negative convex function such that $H(z)=0$ for $z\leq0$ and $H(z)>0$ for $z>0,$ and smooth cut-off function $\xi\in C^{\infty}(\R^3)$ such that $\xi=0$ on $U_{\frac{1}{2}h}$ and $\xi=1$ on $\Omega\setminus U_{h}$ (see Figure \ref{plot:domain}). Note that 
$\mu_n(\chi_n)\geq\mu$, $\mu_n(\chi_n)+\lambda_n(\chi_n)\geq 0$ and the cut-off function $\xi$ is chosen to provide that $\mu_n(\chi_n)=\mu$, $\lambda_n(\chi_n)=\lambda$ on $\supp\uin$. 
Therefore, the assumptions of Lemma \ref{existapprox} are satisfied.

\begin{figure}
	\centering
	\includegraphics{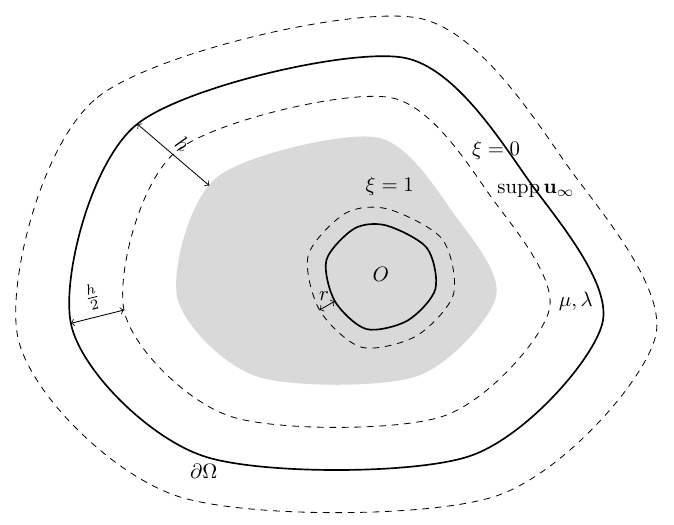}
	\caption{}
	\label{plot:domain}
\end{figure}

Now, by estimates obtained by Lemma \ref{existapprox}
we can choose a suitable convergent subsequences (denoted in the same way)
\begin{align}
	\ro_n\rightarrow \ro \quad&\text{in } L^{\beta}((0,T)\times\Omega)
	\\
	\u_n\rightarrow \u \quad&\text{in } L^{2}(0,T;W^{1,2}(\Omega))
	\\
	(\ro_n\u_n) \rightharpoonup (\ro\u)\quad &\text{in } L^{2}((0,T)\times\Omega)\\
	\nabla\ro_n\rightharpoonup \nabla\ro \quad&\text{in } L^{2}((0,T)\times\Omega)
\end{align}

Now we want to find the limit for the sequence of rigid bodies. We will use the following lemma (see Proposition 5.1. from \cite{Feir03}, see also \cite{Sche19} for a detailed proof).
\begin{lem}\label{isometries_limit}
Let $\vektor{v}_n = \vektor{v}_n(t,\vecx)$ be a family of Carath{\'e}odory functions such that
$$t\mapsto \Bigl( \norm{\vektor{v}_n(t)}_{L^\infty(\R^3)} + \norm{\nabla\vektor{v}_n(t)}_{L^\infty(\R^3)} \Bigr)\text{ is bounded in }L^2(0,T).$$ Let $\eta_n[t]:\R^3 \mapsto \R^3$ be the solution operator representing the characteristic curves generated by $\vektor{v}_n,$ id est
$$\dfrac{\de{}}{\de{t}}\eta_n[t](\vecx) = \vektor{v}_n\bigl(t,\eta_n[t](\vecx)\bigr),\quad \eta_n[0](\vecx)=\vecx,\: \vecx\in\R^3. $$
Finally, let us define for  $O\subset \R^3$, $O_n(t) = \eta_n[t](O). $
Then there exists a subsequence such that
$$\eta_n[t] \to \eta[t] \text{ in } C_{loc}(\R^3) \text{ as } n \to \infty \text{ uniformly in } t \in[0,T],$$
where
\begin{equation}
O(t) = \eta[t](O),
\qquad \frac{d}{dt} \eta[t](\vecx)=\vektor{v}\bigl(t,\eta[t](\vecx)\bigr),
\qquad \eta[0](\vecx)=\vecx,\:\vecx\in \R^3,
\end{equation}
and $\vektor{v}_n$ converges to $\vektor{v}$ weakly-* in $L^2(0,T; W^{1,\infty}(\R^3)).$ Moreover, $\db_{O_n(t)} \to \db_{O(t)}$ in $C_{loc}(\R^3)$ uniformly in $t\in[0,T].$
\end{lem}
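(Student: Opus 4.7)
The plan is to follow the standard strategy for compactness of flow maps generated by velocities with controlled spatial Lipschitz constants, combined with a stability argument for the defining ODE. First I would derive uniform a priori bounds on the family $\{\eta_n\}$. Writing $\eta_n[t](\vecx) = \vecx + \int_0^t \vektor{v}_n(s,\eta_n[s](\vecx))\,\ds$, the $L^2(0,T)$-bound on $\|\vektor{v}_n(\cdot)\|_{L^\infty(\R^3)}$ yields via the Cauchy--Schwarz inequality uniform boundedness of $\eta_n[t](\vecx)$ on bounded sets of initial data, together with uniform $1/2$-H\"older continuity in $t$. The $L^2(0,T)$-bound on $\|\nabla \vektor{v}_n(\cdot)\|_{L^\infty(\R^3)}$ combined with Gronwall's inequality gives
\begin{equation*}
|\eta_n[t](\vecx) - \eta_n[t](\vecy)| \leq |\vecx-\vecy|\,\exp\!\Bigl(\sqrt{T}\,\|\nabla \vektor{v}_n\|_{L^2(0,T;L^\infty(\R^3))}\Bigr),
\end{equation*}
so $\eta_n[t]$ is uniformly Lipschitz in $\vecx$. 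By the Arzel\`a--Ascoli theorem and a diagonal procedure over an exhaustion of $\R^3$ by compact sets, I would extract a subsequence such that $\eta_n[t] \to \eta[t]$ in $C_{\mathrm{loc}}(\R^3)$ uniformly in $t \in [0,T]$, with limit $\eta[\cdot]$ again Lipschitz in $\vecx$ and $1/2$-H\"older in $t$; simultaneously, Banach--Alaoglu applied to the bound in $L^2(0,T;W^{1,\infty}(\R^3))$ furnishes a further subsequence with $\vektor{v}_n \rightharpoonup^* \vektor{v}$ weak-$*$ in $L^2(0,T;W^{1,\infty}(\R^3))$.

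The heart of the argument is to identify $\eta$ as the flow of $\vektor{v}$ by passing to the limit in the integral equation. I would split
\begin{equation*}
\vektor{v}_n(s,\eta_n[s](\vecx)) - \vektor{v}(s,\eta[s](\vecx)) = \bigl[\vektor{v}_n(s,\eta_n[s](\vecx)) - \vektor{v}_n(s,\eta[s](\vecx))\bigr] + \bigl[\vektor{v}_n(s,\eta[s](\vecx)) - \vektor{v}(s,\eta[s](\vecx))\bigr].
\end{equation*}
The first bracket is dominated by $\|\nabla \vektor{v}_n(s)\|_{L^\infty}\,|\eta_n[s](\vecx) - \eta[s](\vecx)|$, whose integral in $s$ vanishes by Cauchy--Schwarz together with the uniform convergence $\eta_n \to \eta$. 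The second bracket, integrated in $s$, is the delicate part and the main obstacle I expect, because $\vektor{v}_n$ has no a priori control in $t$: one approximates the fixed Lipschitz curve $s\mapsto\eta[s](\vecx)$ uniformly by piecewise constant curves, controls the substitution error uniformly in $n$ via the spatial Lipschitz bound on $\vektor{v}_n$, and on each constant piece invokes the weak-$*$ convergence of $\vektor{v}_n$ tested against indicators in $t$ paired with spatial mollifiers at the fixed point, the mollification error being again dominated by the spatial regularity. Uniqueness for the ODE generated by the Lipschitz field $\vektor{v}$ then identifies the limit $\eta$ and promotes the extraction to hold along the whole selected subsequence.

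Finally, the convergence $\db_{O_n(t)} \to \db_{O(t)}$ in $C_{\mathrm{loc}}(\R^3)$ uniformly in $t$ follows from the uniform convergence $\eta_n[t] \to \eta[t]$ on the compact set $\overline{O}$, combined with the uniform spatial Lipschitz bounds on $\eta_n[t]$ and, by reversing time in the ODE, on their inverses; these together yield Hausdorff convergence of $O_n(t) = \eta_n[t](O)$ to $O(t) = \eta[t](O)$, uniformly in $t$, and Hausdorff convergence of closed sets is equivalent to locally uniform convergence of their signed distance functions, which gives the stated claim.
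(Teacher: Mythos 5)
The paper does not actually prove this lemma: it is quoted as Proposition 5.1 of \cite{Feir03}, with \cite{Sche19} cited for a detailed proof. Your argument is correct and follows essentially the same route as those references --- uniform $1/2$-H\"older-in-time and Lipschitz-in-space bounds on the flows, Arzel\`a--Ascoli plus a diagonal extraction and Banach--Alaoglu, identification of the limit flow by splitting the composed velocity and treating the weak-$*$ term via piecewise-constant-in-time approximation of the curve together with spatial mollification (correctly isolating this as the only delicate step), and finally signed-distance convergence from the uniform bi-Lipschitz bounds on $\eta_n[t]$ and their inverses.
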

We apply this lemma on regularized velocity fields  $\vektor{v}_n = R_{r}[\u_n]$ which clearly satisfy the hypotheses.
Thus, we obtain
\begin{align} \label{convergence_chi}
	\chi_n(\vecx) = \db_{O_n(t)}\to \db_{O(t)}\quad\text{in } C_{loc}(\R^3)\text{ uniformly in } t\in[0,T] \\
 \frac{d}{dt} \eta[t](\vecx)=R_{r}[\u]\bigl(t,\eta[t](\vecx)\bigr),
\qquad \eta[0](\vecx)=\vecx.
\end{align}

Since $\dist(O,\partial\Omega)> h+r$ and $t\mapsto \dist(O(t),\partial\Omega)$ is continuous (we control $\u$, thus also the time derivative of $\eta$), there exists $T'\in(0,T]$ such that 
$\dist(O(t),\partial\Omega)\geq h+r$ for all $t\in[0,T']$, so we can choose
\begin{equation}\label{Tmax}
T_{max}(\epsil,\delta)=\max\bigl\{ T'\in(0,T]\, |\, \dist(O(t),\partial\Omega)\geq h+r, \forall t\in[0,T'] \bigr\}>0
\end{equation}
and consider the solution on the time interval $[0,T_{max}]$. 
Now, we define open sets
\begin{align*}
	&Q^f = \{ (t,\vecx)\in (0,T_{max})\times\Omega \,|\, \db_{O(t)}(\vecx)+r <0  \},
	\\
	&Q^S = \{ (t,\vecx)\in (0,T_{max})\times\Omega \,|\, \db_{O(t)}(\vecx)+r >0  \}.
\end{align*}
Let $K^f$ be some compact subset of $Q^f$. 
Then, as in \cite{Feir03},
\begin{equation}
	\mu_n(\chi_n)=\mu,\quad \lambda_n(\chi_n)=\lambda
\end{equation}
on any compact $K^f\subset Q^f$ and for any $n\geq n(K^f)$ for some constant $n(K^f)\in\mathbb{N}$.

Let $K^S$ be some compact subset of $Q^S$. Then there exists some constant $r(K^S)>0$ such that 
\begin{equation}
	\db_{O(t)}(\vecx)+r\geq r(K^S)>0,
	\quad\forall (t,\vecx)\in K^S.
\end{equation}
By \eqref{convergence_chi} there exists $n(K^S)\in\mathbb{N}$ such that
\begin{equation}
\abs{\db_{O_n(t)}(\vecx)-\db_{O(t)}(\vecx)}
<\frac{r(K^S)}{2},
\quad\forall (t,\vecx)\in K^S, n\geq n(K^S).
\end{equation}
Therefore,
\begin{equation}
	\db_{O_n(t)}(\vecx)+r= \db_{O_n(t)}(\vecx)- \db_{O(t)}(\vecx) +\db_{O(t)}(\vecx)+r
	\geq \frac{r(K^S)}{2}>0.
\end{equation}
Function $H$ from \refx{artviscosity} is increasing and so
\begin{equation*}
H(\db_{O_n(t)}(\vecx)+r)\geq H\left(\frac{r(K^S)}{2}\right) >0,
\quad \forall  n\geq n(K^S).
\end{equation*}
For $(t,\vecx)\in K^S$, we have $\dist(\vecx,\partial\Omega)> h$, which means that $\xi(\vecx)=1$. Thus,
$$
\mu_n(\chi_n)
= \mu + n H(\db_{O_n(t)}(\vecx)+r)\xi(\vecx)
\geq \mu+n c
$$
where constant $c>0$ is independent of $t$ and $\vecx$.
This implies that
\begin{equation}\label{RigidLimit}
	\field{D}(\u_n)\to 0\quad\text{in } L^2(K^S),
\end{equation}
because otherwise
\begin{multline*}
	\int_{K^S} 2\mu_n(\chi_n)\abs{\field{D}(\u_n-\uin)}^2\,\dx\dt
	\geq 2(\mu+nc)\int_{K^S} \abs{\field{D}(\u_n-\uin)}^2\,\dx\dt\\
	= 2(\mu+nc)\int_{K^S} \abs{\field{D}(\u_n)}^2\,\dx\dt
	\to \infty
\end{multline*}
which is a contradiction to the bound obtained from the energy inequality. Note that $\uin=\boldsymbol{0}$ on $ K^S$.

 As in \cite{Feir03}, \eqref{RigidLimit} implies that $\field{D}(\u)=0$ on $Q^S$ and it can be shown that $\eta[t]$ is an isometry and $\u$ compatible with $\eta[t]$.

In contrast to the previous work of Feireisl \cite{Feir03}, we do not allow the possible contact of the rigid body with the boundary of the domain. This is actually natural, as the outflow velocity is generally not rigid, thus not compatible with the motion of the rigid body. Therefore, we want to obtain the lower estimate on the minimum existence time $T_{max}$ independent of the approximation parameters $\epsil$ and $\delta.$ 

Let us define $d=\dist(S,\partial\Omega)>h$ for $h>0$ and $t\in (0,T]$. For $\vektor{x}_0\in O$ and $\vecx=\eta[t](\vektor{x}_0)\in O(t)$ we compute
\begin{align*}
|\dist(\vecx,\partial\Omega)-\dist(\vektor{x}_0,&\:\partial\Omega)|
\leq |\vecx-\vektor{x}_0|
= |\eta[t](\vektor{x}_0)-\eta[0](\vektor{x}_0)|
\leq \left|\int_{0}^{t}\partial_t\eta[\tau](\vektor{x}_0)\,\dt\right|\\
&\leq t^{\frac{1}{2}}\|\partial_t\eta\|_{L^2(0,T;L^{\infty}(O))}
\leq t^{\frac{1}{2}}\|R_r[\u]\|_{L^2(0,T;L^{\infty}(\R^3))}
\leq t^{\frac{1}{2}}c\|\u\|_{L^2(0,T;W^{1,2}(\Omega))}\\
&\leq t^{\frac{1}{2}}c(data, T)
\end{align*}
and, since $\dist(O,\partial\Omega)=d+r>h+r$, $r>0$, we obtain
\begin{align*}
\dist(\vecx,\partial\Omega)
&\geq \dist(\vektor{x}_0,\partial\Omega) - |\dist(\vecx,\partial\Omega)-\dist(\vektor{x}_0,\partial\Omega)|
\geq d+r - t^{\frac{1}{2}}c(data, T).
\end{align*}
Now, for 
$$T_0=T_0(data, d, h, T)=\min\left\lbrace T,\left(\frac{d-h}{c(data, T)}\right)^2\right\rbrace>0,$$
we have
\begin{equation*}
\dist(\vecx,\partial\Omega)
>h+r,\quad\forall t\in(0,T_0],\,\forall \vecx\in O(t)
\qquad\Leftrightarrow\qquad \dist(O(t),\partial\Omega)
>h+r,\quad\forall t\in(0,T_0].
\end{equation*}
We conclude that
\begin{equation}
    T_{max}=T_{max}(data, d, h, \epsil,\delta, T)\geq T_0(data, d, h, T)>0. \label{TMax}
\end{equation}

For the momentum and continuity equations, we just comment that for a positive $\epsil$ we have sufficient regularity of the density and velocity fields to ensure convergence in all terms (see \cite[Section 6]{Sche19} for details), allowing us to obtain the following.

\begin{lem} \label{positiveepsil}
Let $\Omega\subset \R ^3$ and suppose that $\epsil>0$ and $T\leq T_{max}$. Suppose that there exist positive numbers $0<\underline{\ro}<\overline{\ro}$ such that the initial and boundary data satisfy
$$\u_0 \in L^2(\Omega),\quad   \ro_{0,\delta} \in W^{1,2}(\Omega),\quad\ro_B\in C(\partial\Omega),\quad \ro_{0,\delta},\ro_B\in\Bigl[\underline{\ro},\overline{\ro}\Bigr] ,  $$ then for $\uin$ an~extension of the boundary data $\uB$ from Lemma \ref{extend} there exists a triple $(\ro, \u, \eta)$ such that 
\begin{enumerate}
     \item the functions $\ro$ and $\u$ lie in regularity classes
\begin{gather*}
\ro\geq0 \text{ a.e. in }Q_T,\quad \ro\in L^{\infty}\bigl(0,T;L^{\beta}(\Omega)\bigr)\cap L^{r}\bigl(0,T;W^{1,r}(\Omega)\bigr),\:r>1  \\
\u\in 
L^2\bigl(0,T;W^{1,2}(\Omega)\bigr),\quad
\u-\uin \in L^2\bigl(0,T;W^{1,2}_0(\Omega)\bigr), \quad \ro\u \in C_{\text{weak}}([0,T];L^1(\Omega)),
\end{gather*}
 and $\u$ is {\it compatible} with parametrized isometries $\eta[t]$ describing the evolution of $S(t),$
 \item the continuity equation and its renormalized version are satisfied in the sense of \refx{CEeps} and \refx{RCEeps},
 \item the weak variational formulation of the momentum equation holds true
\begin{multline}
\int_{\Omega}{(\ro\u\cdot\fib)(\tau,.)}\dx - \int_{\Omega}{\vektor{m}_0(.)\cdot\fib(0,.)}\dx
=  - \epsil \int_{0}^\tau \int_{\Omega} \bigl(   \nabla \ro\cdot \nabla \u \cdot \fib\bigr) \dx\dt\\ + \int_{0}^\tau \int_{\Omega} \Bigl(\ro\u \cdot\partial_t \fib +(\ro\u\otimes\u) :\field{D}(\fib)  + p_\delta(\ro)\Div\fib -\field{S}(\u):\field{D}(\fib) \Bigr) \dx\dt \\
\forall\tau\in[0,T],\:\forall \fib \in \mathcal{R}(\overline{Q}^S) = \bigl\{\fib\in C^{\infty}_c(Q_T)|\field{D}(\fib)=0\text{ on some neighbourhood of }\overline{Q}^S \bigr\},
\end{multline}
\item the energy inequality \refx{EIeps} is valid with $\field{S}$ instead of $\field{S}_{\chi,n}.$
\end{enumerate}
Moreover, we have estimates \refx{esteps}, \refx{esteps2} and \refx{esteps3}.
\end{lem}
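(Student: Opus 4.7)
The plan is to realize Lemma \ref{positiveepsil} as the outcome of the limit $n\to\infty$ in the $(\delta,\epsil,n,\reg)$-approximate problem of Lemma \ref{existapprox}, on the time interval $[0,T_{max}]$ determined by \eqref{TMax}. The a priori estimates in Lemma \ref{existapprox} are independent of the non-constant viscosities, so after extracting subsequences one already has $\ro_n\to\ro$ in $L^\beta((0,T)\times\Omega)$, $\u_n\rightharpoonup\u$ in $L^2(0,T;W^{1,2}(\Omega))$ and $\nabla\ro_n\rightharpoonup\nabla\ro$ in $L^2$; Lemma \ref{isometries_limit} identifies the geometric limit $\eta[t]$ and yields the uniform convergence $\chi_n\to\chi$ in $C_{loc}(\R^3)$. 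What remains is to pass to the limit in each item of Definition \ref{defapprox}, with the momentum equation test space shrunk to $\mathcal{R}(\overline{Q}^S)$, and to verify compatibility of $\u$ with $\eta[t]$.

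For the continuity equation \eqref{CEeps} the weak/strong pair $(\ro_n,\u_n)$ suffices. The renormalized version \eqref{RCEeps} additionally requires strong $L^2$ convergence of $\nabla\ro_n$ to identify the term $\epsil b''(\ro)|\nabla\ro|^2$; this comes from an energy argument on differences $\ro_n-\ro_m$ adapted from \cite{FeNo22}. The $L^r(0,T;W^{1,r}(\Omega))$ regularity with some $r>1$ then follows from parabolic regularity for \eqref{CEapprox} at fixed $\epsil>0$. The momentum equation is the decisive point: any $\fib\in\mathcal{R}(\overline{Q}^S)$ has $\field{D}(\fib)=0$ on an open neighborhood $\mathcal{N}$ of $\overline{Q}^S$, so the viscous term reduces to an integral over a set whose time slices lie in compact subsets $K^f\subset Q^f$. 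On such a $K^f$ one has $\mu_n(\chi_n)=\mu$ and $\lambda_n(\chi_n)=\lambda$ for all $n\geq n(K^f)$, exactly as in the text preceding \eqref{RigidLimit}, so $\field{S}_{\chi_n,n}(\u_n):\field{D}(\fib)=\field{S}(\u_n):\field{D}(\fib)$ there and passes to the weak limit. The convective term $(\ro_n\u_n\otimes\u_n):\field{D}(\fib)$ is handled by Aubin--Lions: a bound on $\partial_t(\ro_n\u_n)$ in a negative-order Sobolev space, extracted directly from the momentum equation and the uniform estimates, yields the strong convergence of $\ro_n\u_n$ required. The pressure and $\epsil\nabla\ro_n\cdot\nabla\u_n\cdot\fib$ terms then follow from strong convergence of $\ro_n$ in $L^\beta$ and of $\nabla\ro_n$ in $L^2$, respectively.

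For the rigid-body identification, the same blow-up argument as in the text preceding \eqref{RigidLimit} shows that on any compact $K^S\subset Q^S$ one has $\mu_n(\chi_n)\geq\mu+nc$, which combined with the energy bound forces $\field{D}(\u_n)\to 0$ in $L^2(K^S)$; hence $\field{D}(\u)=0$ a.e.\ on $Q^S$, giving the compatibility of $\u$ with the isometries $\eta[t]$ produced by Lemma \ref{isometries_limit}. The energy inequality \eqref{EIeps} with $\field{S}$ in place of $\field{S}_{\chi,n}$ is inherited by weak lower semicontinuity of the dissipation (the additional non-negative pieces $nH(\chi_n+\reg)\xi\,|\field{D}(\u_n-\uin)|^2$ on the left can simply be dropped) together with the convergences above on the right-hand side. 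The main technical obstacle I expect is the strong $L^2$ convergence of $\nabla\ro_n$: it is what enables identification of the $\epsil$-artificial-diffusion terms both in the renormalized continuity equation and in the momentum equation, and it requires adapting the $\epsil$-parabolic estimate of \cite{FeNo22} to the present inflow-outflow boundary condition \eqref{BCCEapprox}; a secondary, but more conceptual, point is the careful bookkeeping between the diverging viscosity on $Q^S$ and the test-function restriction to $\mathcal{R}(\overline{Q}^S)$, which relies on the uniform convergence $\chi_n\to\chi$ being available only on compacts of $Q^f$.
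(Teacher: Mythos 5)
Your proposal follows the paper's construction step by step: pass $n\to\infty$ in the $(\delta,\epsil,n,\reg)$-approximate problem on $[0,T_{max}]$, invoke Lemma~\ref{isometries_limit} on the regularised fields $R_{\reg}[\u_n]$ to obtain the limit isometries $\eta[t]$ and locally uniform convergence of the signed distances $\chi_n$, and exploit the dichotomy between eventually-constant viscosities on compacts of $Q^f$ (where $\field{D}(\fib)\neq 0$ for $\fib\in\mathcal{R}(\overline{Q}^S)$ is supported) and divergent viscosities on compacts of $Q^S$, which forces $\field{D}(\u)=0$ on $Q^S$ from the energy bound and yields compatibility of $\u$ with $\eta[t]$. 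The paper is terser on the analytic limit passage (it cites \cite{Sche19} for the momentum and continuity equations and \cite{FeNo22} for the strong $L^2$ convergence of $\nabla\ro_n$, both of which you correctly identify as the technical points); the only small slip in your write-up is the remark that the uniform convergence $\chi_n\to\chi$ is ``available only on compacts of $Q^f$''---Lemma~\ref{isometries_limit} gives it on all compacts of $\R^3$, and what distinguishes $K^f\subset Q^f$ from $K^S\subset Q^S$ is the eventual constancy versus divergence of $\mu_n,\lambda_n$, not where $\chi_n$ converges.
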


\subsection{Limit $\epsil\to 0$}
\label{Sec:ArtificialViscosity}

In this section our goal is to prove the following lemma.
\begin{lem} \label{positivedelta}
	Let $\Omega\subset\R^3$ be a bounded domain of class $C^{2+\nu}$, $\nu>0$ and let $h>0$. Suppose that there exist positive numbers $0<\underline{\ro}<\overline{\ro}$ such that the initial and boundary data satisfy
	$$
	(\ro\u)_0 \in L^{2}(\Omega),\quad   \ro_{0,\delta} \in W^{1,2}(\Omega),
	$$
	$$  
	\ro_B\in C(\partial\Omega),
	\quad\uB\in C^2(\partial\Omega),
	\quad \ro_{0,\delta},\ro_B\in\Bigl[\underline{\ro},\overline{\ro}\Bigr]
	$$ 
	and the corresponding pressure $p$ is determined by
	\begin{equation*}
	p(\ro) = a\rho^{\gamma}+ \delta\ro^{\beta}, \qquad
	\beta>\max\left\lbrace \frac{9}{2},\gamma\right\rbrace,
	\,\gamma>\frac{3}{2} .
	\end{equation*}
	Then for $\uin$ an~extension of the boundary data $\uB$ from Lemma \ref{extend} there exist a time $T=T(h)>0$ and a weak solution in the sense of Definition \ref{def_weak} such that $\dist(S(t),\partial\Omega)\geq h$, for all $t\in[0,T]$.
\end{lem}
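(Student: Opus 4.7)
The plan is to start from the sequence $(\ro_\epsil,\u_\epsil,\eta_\epsil)$ of solutions provided by Lemma~\ref{positiveepsil}, fix $\delta>0$, and pass $\epsil\to 0^+$. The uniform bounds \refx{esteps}--\refx{esteps2} supply, along a subsequence,
$$\ro_\epsil\weak\ro\text{ in }L^\infty(0,T;L^\beta(\Omega)),\qquad\u_\epsil\weak\u\text{ in }L^2(0,T;W^{1,2}(\Omega)),$$
together with strong convergences of $\ro_\epsil$ in $C_{\text{weak}}([0,T];L^\beta)$ and of $\ro_\epsil\u_\epsil$ via standard Aubin--Lions arguments applied to \refx{CEeps} and the momentum equation. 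The artificial-viscosity terms $-\epsil\Delta\ro_\epsil$ in the continuity equation and $-\epsil\nabla\ro_\epsil\cdot\nabla\u_\epsil\cdot\fib$ in the momentum equation vanish in the limit because $\sqrt{\epsil}\|\nabla\ro_\epsil\|_{L^2}$ and $\|\nabla\u_\epsil\|_{L^2}$ are uniformly bounded. Convergence $\eta_\epsil[t]\to\eta[t]$ in $C_{loc}(\R^3)$ uniformly in $t$, and compatibility of $\u$ with $\eta[t]$ on the limit solid, follow from Lemma~\ref{isometries_limit} applied to $\vektor{v}_\epsil=R_\reg[\u_\epsil]$.

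The crucial new input is a local improvement of pressure integrability beyond $L^\infty(L^\beta)$. Because of the general inflow-outflow data, no such improvement is available up to $\partial\Omega$, so the Bogovski\u\i\ technique must be localized. Testing the momentum equation with $\fib=\psi\,\mathcal{B}\bigl[\ro_\epsil^\theta-\langle\ro_\epsil^\theta\rangle\bigr]$, where $\psi$ is a smooth cut-off supported in $F_\epsil(t)\setminus\overline{U_h}$ (so that $\fib\in\mathcal{R}(\overline{Q}^{S_\epsil})$ and vanishes near $\partial\Omega$) and $\theta=\min\{1,\tfrac{2}{3}\gamma-1\}$, yields
$$\int_0^T\!\int_K p_\delta(\ro_\epsil)\,\ro_\epsil^\theta\,\dx\dt\leq c(K,data,\delta),$$
uniformly in $\epsil$, for every compact $K$ in the interior fluid region disjoint from $\overline{U_h}$. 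This provides equi-integrability of $p_\delta(\ro_\epsil)$ on such compacta and the existence of a weak limit $\overline{p_\delta(\ro)}$ in $L^1_{loc}$.

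To identify $\overline{p_\delta(\ro)}=p_\delta(\ro)$ we follow the Lions--Feireisl strategy. Using test functions $\fib=\psi\,\nabla\Delta^{-1}[\ro_\epsil]$ with $\psi\in C_c^\infty(Q^f)$, the div-curl lemma delivers the effective viscous flux identity
$$\overline{p_\delta(\ro)\Div\u}-\overline{p_\delta(\ro)}\,\Div\u=(2\mu+\lambda)\bigl(\overline{\ro\Div\u}-\ro\Div\u\bigr)\text{ a.e.\ in }Q^f,$$
and combined with the renormalized continuity equation at $b(\ro)=\ro\log\ro$, it forces $\ro_\epsil\to\ro$ strongly in $L^1_{loc}(Q^f)$. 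On $Q^S$ the density is transported by the rigid motion, so strong convergence follows directly from $\eta_\epsil\to\eta$. The energy inequality \refx{EIeps} passes to the limit by weak lower semicontinuity, after discarding the nonnegative terms $\epsil P''_\delta(\ro)|\nabla\ro|^2$ and the $\Gamma_{in}$ defect integrand, yielding the form required by Definition~\ref{def_weak} (with $p_\delta,P_\delta$ in place of $p,P$). The minimal time $T=T(h)>0$ is inherited from \refx{TMax}, since the bound on $\|\u\|_{L^2(W^{1,2})}$ entering $T_0$ is uniform in $\epsil$.

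The main obstacle is that the Bogovski\u\i\ test functions and the effective-viscous-flux test functions must simultaneously be admissible in the rigid-body compatible class $\mathcal{R}(\overline{Q}^{S_\epsil})$ and supported away from $\partial\Omega$. This forces both arguments to live in an ``annular'' region between $\partial S_\epsil(t)$ and $\partial\Omega$, whose uniform (in $\epsil$) nondegenerate thickness is precisely what \refx{TMax} provides. A further technical point is the passage from test functions in $\mathcal{R}(\overline{Q}^{S_\epsil})$ to test functions in $\mathcal{R}(\overline{Q}^S)$, carried out by approximating a given $\fib\in\mathcal{R}(\overline{Q}^S)$ by variants truncated relative to $S_\epsil(t)$, using the uniform convergence $\db_{S_\epsil(t)}\to\db_{S(t)}$ furnished by Lemma~\ref{isometries_limit}.
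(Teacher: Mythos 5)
Your overall architecture coincides with the paper's: uniform bounds from Lemma \ref{positiveepsil}, vanishing of the $\epsil$-terms, Lemma \ref{isometries_limit} for the convergence of the bodies, local Bogovskii estimates for the pressure on compact subsets of $Q^f$, effective viscous flux plus the renormalized equation with $b(z)=z\log z$ for strong convergence of the density, and the uniform existence time $T=T_0(h)$ from \refx{TMax}. There are, however, two genuine gaps.

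First, the energy inequality does not simply ``pass to the limit by weak lower semicontinuity after discarding nonnegative terms'': the term $-\int_0^\tau\int_\Omega p_\delta(\ro_\epsil)\Div\uin\,\dx\dt$ sits on the \emph{right-hand side} with a minus sign, and $p_\delta(\ro_\epsil)$ converges strongly only on compact subsets of $Q^f$ --- near $\partial\Omega$ one only has an $L^\infty(0,T;L^1)$ bound, precisely because (as you note yourself) the Bogovskii improvement is unavailable up to the boundary. You must show $\liminf_{\epsil\to0}\int p_\delta(\ro_\epsil)\Div\uin\,\dx\dt \geq \int p_\delta(\ro)\Div\uin\,\dx\dt$, and this is exactly where the sign condition $\Div\uin\geq 0$ on $U_h$ from Lemma \ref{extend} is indispensable: split the integral into $U_{h'}$, where the integrand is nonnegative and may be dropped from below, and $\Omega\setminus U_{h'}$, where strong interior convergence applies, and then let $h'\to 0$. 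Without this step the limit passage in the energy inequality fails; it is the central new difficulty created by the inflow--outflow data. (One must also first integrate the inequality in $\tau$, since it only holds for a.e.\ $\tau$.)

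Second, your claim that on $Q^S$ ``the density is transported by the rigid motion, so strong convergence follows directly from $\eta_\epsil\to\eta$'' is false at the $\epsil>0$ level: the approximate continuity equation \refx{CEapprox} carries the parabolic term $-\epsil\Delta\ro_\epsil$ on all of $\Omega$, solid part included, so $\ro_\epsil$ is not transported there. Moreover, the defect inequality obtained from subtracting the two renormalized equations is integrated over all of $\Omega$, so you cannot close the $\ro\log\ro$ argument knowing $\overline{\ro\Div\u}\geq\ro\Div\u$ only on $Q^f$. The correct route is the paper's: on any compact subset of $Q^S$ one has $\Div\u_\epsil=0$ for $\epsil$ small, so the inequality extends to a.e.\ point of $Q_T$; combined with the convexity of $b$ (which gives the favourable sign to the $\Gamma_{in}$ boundary terms) and test functions concentrating away from $\Gamma_{out}$, this yields $\overline{\ro\log\ro}=\ro\log\ro$ and strong convergence on all of $Q_T$. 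A smaller slip: your effective viscous flux identity should read $\overline{p_\delta(\ro)\,\ro}-\overline{p_\delta(\ro)}\,\ro=(2\mu+\lambda)\bigl(\overline{\ro\Div\u}-\ro\Div\u\bigr)$; the quantity multiplying the flux is $\ro_\epsil$, not $\Div\u_\epsil$.
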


Let $(\ro_{\epsil}, \u_{\epsil}, \eta_{\epsil})$ be a solution obtained by Lemma \ref{positiveepsil}, and let $T=T_{0}(h)>0$ arising from \refx{TMax}. We will first pass to the limit in the weak formulations of the continuity and momentum equation.
	
By Lemma \ref{positiveepsil} we have the following estimates
\begin{align}
&\norm{\ro_{\epsil}\abs{\u_{\epsil}}^2}_{L^{\infty}(0,T;L^1(\Omega))}
\leq c(data,\mu,T),
\\
&\norm{\u_{\epsil}}_{L^{2}(0,T;W^{1,2}(\Omega))}
\leq c(data,\mu,T),
\\
&\norm{\ro_{\epsil}}_{L^{\infty}(0,T;L^{\beta}(\Omega))}
\leq c(data,\delta,\mu,T),
\\
&\sqrt{\epsil}\norm{\nabla\ro_{\epsil}}_{L^{2}(0,T;L^{2}(\Omega))}
\leq c(data,\delta,\mu,T),
\end{align}
which implies	
\begin{align}
\ro_{\epsil}\stackrel{*}{\rightharpoonup} \ro \quad&\text{in } L^{\infty}(0,T;L^{\beta}(\Omega))
,\\
\u_{\epsil}\rightharpoonup \u \quad&\text{in } L^{2}(0,T;W^{1,2}(\Omega)),\\
\ro_{\epsil}\u_{\epsil}\stackrel{*}{\rightharpoonup}\overline{\ro\u} \quad&\text{in } L^{\infty}(0,T;L^{\frac{2\beta}{\beta+1}}(\Omega))
\label{weaklimit_rou},\\
\ro_{\epsil}\u_{\epsil}\otimes\u_{\epsil}\rightharpoonup \mathbb{P} \quad&\text{in } L^{1}(Q_T),
\label{weaklimit_convective}
\end{align}
and terms with $\epsil$ in equations vanish
\begin{align*}
&\epsil \int_0^\tau \int_{\Omega}\nabla\ro_{\epsil}\cdot \nabla\psi \dx\dt
\rightarrow 0,
\\
& \epsil \int_{0}^\tau \int_{\Omega} \nabla \ro_{\epsil}\cdot \nabla \u_{\epsil} \cdot \fib \dx\dt
\rightarrow 0.
\end{align*}
By the Arzel\`{a}-Ascoli theorem, from the continuity equation and the uniform bounds above, it can be shown that
\begin{equation}\label{strong_conv_ro_eps}
\varrho_{\varepsilon} \rightarrow \varrho \text { in } C_{\text {weak }}\left([0, T] ; L^\beta(\Omega)\right).
\end{equation}
From embedding $L^\beta(\Omega) \hookrightarrow \hookrightarrow W^{-1,2}(\Omega)$ it follows that
\begin{equation*}
\varrho_{\varepsilon} \rightarrow \varrho \text { in } C_{\text {weak }}\left([0, T] ; W^{-1,2}(\Omega)\right),
\end{equation*}
which together with
$\mathbf{u}_{\varepsilon} \rightharpoonup\mathbf{u}$ in $L^2\left(0, T ; W^{1,2}(\Omega)\right)$ implies
\begin{align*}
\int_{0}^{T}\int_{\Omega}\left(\ro_{\epsil}\u_{\epsil}-\ro\u\right)\dx\dt
= \int_{0}^{T}\int_{\Omega}\left(\ro_{\epsil}-\ro\right)\u_{\epsil}\dx\dt
+\int_{0}^{T}\int_{\Omega}\ro\left(\u_{\epsil}-\u\right)\dx\dt\to 0.
\end{align*}
Therefore,
\begin{align*}
&\ro_{\epsil}\u_{\epsil}\stackrel{*}{\rightharpoonup} \ro\u \quad \text{in } L^{\infty}(0,T;L^{\frac{2\beta}{\beta+1}}(\Omega))
\end{align*}
and by letting $\epsil\to 0$ in the continuity equation \eqref{RCEeps} we obtain that $(\ro,\u)$ satisfies continuity equation \eqref{continuity_weak}, as well as its renormalized version by using the following Lemma 3.1 from \cite{ChJiNo19}.

\begin{lem}
	Suppose that $\Omega\subset\R^3$ is a bounded Lipschitz domain,
	and let $\ro_B,\u_B$ satisfy assumptions of Theorem \ref{main}. Assume that the inflow portion of the
	boundary $\Gamma_{in}$ is an  open $(d-1)$-dimensional manifold of class $C^2$. Suppose further that couple
	$(\ro,\u)\in L^2(Q_T)\times L^2(0,T;W^{1,2}(\Omega))$ satisfies the continuity equation in the weak sense \eqref{continuity_weak}.
	Then $(\ro,\u)$ also satisfies renormalized continuity equation \eqref{renormalized_weak}.
\end{lem}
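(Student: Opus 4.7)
The plan is to adapt the classical DiPerna--Lions renormalization argument to the inflow/outflow setting by extending $(\ro,\u)$ across $\Gamma_{in}$ and then applying the standard interior mollification/commutator technique on the enlarged domain. Because $\psi$ is allowed to be nonzero on $\Gamma_{in}$ but must vanish near $\Gamma_{out}$, the extension is needed only across $\Gamma_{in}$.

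\textbf{Step 1: Extension across $\Gamma_{in}$.} Using the $C^2$ regularity of $\Gamma_{in}$ and $\uB\in C^2(\partial\Omega)$, I construct a one-sided tubular neighbourhood $\mathcal{N}$ of $\Gamma_{in}$ in $\R^3\setminus\overline\Omega$, parametrized by normal coordinates $(\vecx_\Gamma,s)$ with $s\in(0,s_0)$. Extend $\u$ to $\tilde\u\in L^2(0,T;W^{1,2}(\Omega\cup\mathcal{N}))$ by taking $\tilde\u(t,\vecx_\Gamma,s)$ to be an $s$-independent $C^2$ extension of $\uB$ inside $\mathcal{N}$; extend $\ro$ by $\tilde\ro(t,\vecx_\Gamma,s)=\ro_B(\vecx_\Gamma)$ inside $\mathcal{N}$. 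Then $\tilde\ro\in L^\infty(0,T;L^\infty(\mathcal{N}))\subset L^2$ and the outside trace on $\Gamma_{in}$ is exactly $\ro_B$.

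\textbf{Step 2: Continuity equation across $\Gamma_{in}$.} I would show that $(\tilde\ro,\tilde\u)$ satisfies $\partial_t\tilde\ro+\Div(\tilde\ro\,\tilde\u)=0$ in $\mathcal D'((0,T)\times(\Omega\cup\mathcal{N}))$. Testing this candidate identity against arbitrary $\varphi\in C^1_c((0,T)\times(\Omega\cup\mathcal{N}))$ splits into three contributions: the interior of $\Omega$ (given by \eqref{continuity_weak} applied with $\psi=\varphi$), the interior of $\mathcal{N}$ (direct calculation, using that $\tilde\u\cdot\nabla\tilde\ro=0$ and $\tilde\ro\,\Div\tilde\u=\ro_B\,\Div\tilde\u$ can be absorbed after integration by parts), and a jump across $\Gamma_{in}$. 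The two-sided normal flux contributions cancel precisely because the weak formulation \eqref{continuity_weak} contributes the boundary term $-\int_{\Gamma_{in}}\ro_B\uB\cdot\en\,\varphi$, which matches the inward flux from $\mathcal{N}$. This is the one nontrivial point where the $C^2$ geometry of $\Gamma_{in}$ is used to make sense of the normal-trace identification.

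\textbf{Step 3: Mollification, commutator, chain rule.} Let $\omega_\epsil$ be a standard spatial mollifier and set $\tilde\ro_\epsil=\tilde\ro\ast_x\omega_\epsil$ on a slight shrinking of $\Omega\cup\mathcal{N}$. Then
\begin{equation*}
\partial_t\tilde\ro_\epsil+\Div(\tilde\ro_\epsil\,\tilde\u)=r_\epsil,\qquad r_\epsil:=\Div(\tilde\ro_\epsil\tilde\u)-\Div\bigl((\tilde\ro\,\tilde\u)\ast_x\omega_\epsil\bigr),
\end{equation*}
and the Friedrichs/DiPerna--Lions commutator lemma gives $r_\epsil\to 0$ in $L^1_{loc}((0,T)\times(\Omega\cup\mathcal{N}))$ thanks to $\tilde\ro\in L^2$ and $\tilde\u\in L^2(W^{1,2})$. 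Since $\tilde\ro_\epsil$ is smooth in $\vecx$, the classical chain rule yields
\begin{equation*}
\partial_t b(\tilde\ro_\epsil)+\Div\bigl(b(\tilde\ro_\epsil)\tilde\u\bigr)+\bigl(b'(\tilde\ro_\epsil)\tilde\ro_\epsil-b(\tilde\ro_\epsil)\bigr)\Div\tilde\u=b'(\tilde\ro_\epsil)r_\epsil.
\end{equation*}
Because $b'\in C^1_c([0,\infty))$, both $b$ and $b'$ are bounded; combined with $\tilde\ro_\epsil\to\tilde\ro$ a.e.\ and in every $L^q_{loc}$, $q<\infty$, every term passes to the limit and $b'(\tilde\ro_\epsil)r_\epsil\to 0$ in $L^1_{loc}$. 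Hence the renormalized equation holds in $\mathcal D'$ on the extended domain.

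\textbf{Step 4: Restriction to $\Omega$.} Take $\psi\in C^1_c([0,T]\times(\Omega\cup\Gamma_{in}))$, extend it to a $C^1$ function on $[0,T]\times(\Omega\cup\mathcal{N})$, multiply by the renormalized equation and integrate by parts separately on $\Omega$ and on $\mathcal{N}$. The $\mathcal{N}$-integral can be made arbitrarily small by shrinking the extension (since $\tilde\ro,\tilde\u$ are bounded there and $|\mathcal{N}|\to 0$), and the only surviving boundary contribution is the flux through $\Gamma_{in}$, equal to $-\int_0^\tau\!\int_{\Gamma_{in}}b(\ro_B)\uB\cdot\en\,\psi\,\de S\,\dt$ because $\tilde\ro=\ro_B$, $\tilde\u=\uB$ on $\Gamma_{in}$. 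This is exactly the boundary term in \eqref{renormalized_weak}.

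\textbf{Main obstacle.} The routine part is the commutator/chain-rule argument in Step~3. The delicate point is Step~2: rigorously justifying that the glued pair $(\tilde\ro,\tilde\u)$ actually solves the continuity equation across $\Gamma_{in}$ in the sense of distributions, i.e.\ that the normal-trace jump vanishes. This is where the $C^2$ regularity of $\Gamma_{in}$ (and the $C^2$-regularity of $\uB$, $C$-regularity of $\ro_B$) is essential, since the normal trace of $\ro\u$ has to be interpreted via the divergence-measure field framework and matched to the boundary datum encoded in \eqref{continuity_weak}.
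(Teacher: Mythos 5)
The paper itself does not prove this lemma: it is quoted verbatim from \cite{ChJiNo19} (Lemma 3.1 there). Your overall strategy --- extend $(\ro,\u)$ across $\Gamma_{in}$ into a one-sided exterior collar $\mathcal{N}$, verify the continuity equation on the enlarged domain by letting the boundary term of \eqref{continuity_weak} cancel the exterior normal flux, and then run the Friedrichs/DiPerna--Lions commutator argument before restricting back to $\Omega$ --- is exactly the strategy of that reference, and your Steps 1, 3 and 4 are sound (note that $\ro\in L^2$, $\u\in L^2(0,T;W^{1,2})$ is precisely the borderline pair $1/2+1/2=1$ for the commutator lemma, and $b'\in C^1_c$ makes all renormalized quantities bounded).

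There is, however, a genuine flaw in Step 2. With the extension you chose, $\tilde\ro(t,\vecx_\Gamma,s)=\ro_B(\vecx_\Gamma)$ constant in the normal variable, the pair $(\tilde\ro,\tilde\u)$ does \emph{not} satisfy the homogeneous continuity equation in $\mathcal{N}$: formally $\partial_t\tilde\ro+\Div(\tilde\ro\,\tilde\u)=\tilde\u\cdot\nabla\tilde\ro+\tilde\ro\Div\tilde\u$, and neither term vanishes --- $\tilde\u$ has tangential components, so $\tilde\u\cdot\nabla\tilde\ro\neq0$ unless $\ro_B$ is constant (and $\nabla\tilde\ro$ is not even a function for merely continuous $\ro_B$), while $\ro_B\Div\tilde\u$ is a genuine source that cannot be absorbed by integration by parts. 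Two repairs are available. The clean one, used in \cite{ChJiNo19}, is to define $\tilde\ro$ in the collar as the solution of the transport equation $\partial_t\tilde\ro+\Div(\tilde\ro\,\tilde\u)=0$ along the characteristics of the regular extension of $\uB$ with datum $\ro_B$ on $\Gamma_{in}$; since $\uB\cdot\en<0$ on $\Gamma_{in}$, these characteristics foliate a one-sided exterior collar, and this is precisely where the $C^2$ regularity of $\Gamma_{in}$ and of $\uB$ enters. Alternatively, keep your extension (after smoothing $\ro_B$ tangentially, or assuming it Lipschitz), record the bounded source $f=\tilde\u\cdot\nabla\tilde\ro+\tilde\ro\Div\tilde\u$ supported in $\overline{\mathcal{N}}$, and carry the resulting right-hand side $b'(\tilde\ro)f$ through the commutator argument; it disappears in the shrinking limit of your Step 4 because the measure of the collar tends to zero. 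Finally, you should state explicitly that the trace of $\u$ on $\Gamma_{in}$ equals $\uB$ (in the application this follows from $\u-\uin\in L^2(0,T;W^{1,2}_0(\Omega))$), since otherwise the glued velocity fails to be $W^{1,2}$ across $\Gamma_{in}$ and the commutator lemma is not applicable there.
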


Compatibility of the velocities $\u_{\epsil}$ with the isometries $\{\overline{S},\eta_{\epsil}[t]\}$ gives
\begin{equation*}
	\u_{\epsil}(t,\vecx) = \u_{\epsil}^{S}(t,\vecx) 
	= \mathbf{V}_{\epsil}(t) + \field{Q}_{\epsil}(t)(\vecx-\vecX_{\epsil}(t))
\end{equation*}
for almost all $\vecx\in\overline{S}$ and almost all $t\in(0,T)$, where
\begin{equation*}
\frac{d}{dt}\vecX_{\epsil}(t) = \mathbf{V}_{\epsil}(t),
\quad
S_{\epsil}(t) = \eta_{\epsil}[t](S).
\end{equation*}
Since $\u_{\epsil}$ is bounded in $L^2(0,T;W^{1,2}(\Omega))$, functions $\mathbf{V}_{\epsil}$, $\field{Q}_{\epsil}$ are bounded in $L^2(0,T)$, and therefore $\u_{\epsil}^{S}$ is bounded in $L^2(0,T;W^{1,\infty}(\Omega))$. 
Using the extension theorem for Sobolev functions, we can now extend velocities $\u_{\epsil}^{S}$ to functions $\mathbf{v}_{\epsil}$, which are bounded in $L^2(0,T;W^{1,\infty}(\R^3))$. Next, we apply Lemma \ref{isometries_limit} with these functions $\mathbf{v}_{\epsil}$ to obtain
\begin{equation*}
\eta_{\epsil}[t] \to \eta[t] \text{ in } C_{loc}(\R^3) \text{ as } {\epsil} \to 0 \text{ uniformly in } t \in[0,T]
\end{equation*}
and
\begin{equation}\label{covergence_body}
S_{\epsil}(t) \stackrel{b}{\rightarrow} S(t), \quad \text { with } S(t)=\eta[t]\left(S\right)
\end{equation}
which further implies that $\{\overline{S},\eta[t]\}$ is compatible with the limit velocity $\u$.

In the previous section, time $T_0(h)>0$ is chosen so that $\dist(S_{\epsil}(t),\partial \Omega)\geq h$ for all $\epsil\in(0,\epsil_0)$ and for all $t\in[0,T_0(h)]$. Now the following Lemma implies that $\dist(S(t),\partial \Omega)\geq h$, for all $t\in[0,T_0(h)]$.
    \begin{lem}
		Let $h>0$ and let $S_{n}\subset\Omega$ be a sequence of sets satisfying $\dist(S_{n},\partial \Omega)\geq h$, for all $n\in\mathbb{N}$. If $S_{n} \stackrel{b}{\rightarrow} S$, then $\dist(S,\partial \Omega)\geq h$.
    \end{lem}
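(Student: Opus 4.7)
The plan is a very short contradiction argument; the only substantive ingredient is making explicit what the convergence $S_n\stackrel{b}{\to}S$ entails. In the context of this paper, this convergence is inherited from the locally uniform convergence $\db_{O_n(t)}\to\db_{O(t)}$ supplied by Lemma \ref{isometries_limit}, and in particular it implies the following approximation property: for every $\vecx\in\overline{S}$ there exists a sequence $\vecx_n\in\overline{S_n}$ with $\vecx_n\to\vecx$. (This is immediate since $\db_{S_n}(\vecx)\to\db_{S}(\vecx)=0$ locally uniformly, so a nearby point of $\overline{S_n}$ can be selected.)

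With this observation in hand, I would argue by contradiction. Suppose $\dist(S,\partial\Omega)<h$. Then there exist $\vecx\in\overline{S}$ and $\vecy\in\partial\Omega$ with $d:=|\vecx-\vecy|<h$. Fix $\varepsilon\in\bigl(0,\tfrac{h-d}{2}\bigr)$, and use the approximation property above to pick, for each sufficiently large $n$, a point $\vecx_n\in\overline{S_n}$ with $|\vecx_n-\vecx|<\varepsilon$. Then
\begin{equation*}
\dist(\vecx_n,\partial\Omega)\leq |\vecx_n-\vecy|\leq |\vecx_n-\vecx|+|\vecx-\vecy|<\varepsilon+d<h,
\end{equation*}
which contradicts the hypothesis $\dist(S_n,\partial\Omega)\geq h$. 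Hence $\dist(S,\partial\Omega)\geq h$.

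There is no real obstacle here; the whole argument is a three-line triangle-inequality computation. The only thing I would be careful to flag explicitly is the interpretation of the notation $\stackrel{b}{\to}$, so that a reader unfamiliar with the convention sees precisely why one can extract the approximating sequence $\vecx_n\in\overline{S_n}$. Alternatively, one could phrase the proof in terms of the lower semicontinuity of $S\mapsto\dist(S,\partial\Omega)$ under Hausdorff/$\db$-convergence, but the direct contradiction above seems cleaner.
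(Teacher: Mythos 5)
Your proof is correct and follows essentially the same route as the paper's: both arguments interpret $S_n\stackrel{b}{\to}S$ as uniform convergence of the signed distance functions, deduce that every point of $S$ is approximated by points of $S_n$, and conclude by the triangle inequality (the paper writes this directly as $S\subset S_n^{\sigma}$ and lets $\sigma\to0$, whereas you phrase it as a contradiction at a single bad point). The only minor imprecision is the parenthetical claim $\db_S(\vecx)=0$ for $\vecx\in\overline{S}$, which holds only on $\partial S$; for interior points one instead notes $\db_{S_n}(\vecx)\to\db_S(\vecx)>0$ forces $\vecx\in S_n$ for large $n$, so the approximation property survives.
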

\begin{proof}
    Let $\sigma>0$. Then there exists $n_1\in\mathbb{N}$ such that 
    $$
    \norm{\db_{S_{n}}-\db_{S}}_{L^{\infty}(\Omega)}<\sigma,\qquad \forall n\geq n_1,
    $$
    whence, for $\vecy\in S\setminus S_{n}$, we have
    $$
    \dist\bigl(\vecy,\overline{S_{n}}\bigr) + \dist\bigl(\vecy,\overline{\R^3\setminus S}\bigr)<\sigma,
    $$
    which implies that
    $$
    S\subset S_{n}^{\sigma}
		=\left\lbrace \vecy\in \Omega\,:\, \dist(\vecy,S_{n})<\sigma\right\rbrace,\qquad \forall n\geq n_1.
    $$
    Now, we obtain that
    $$
    \dist(S,\partial \Omega)\geq \dist(S_{n}^{\sigma},\partial \Omega)
    \geq \dist(S_{n},\partial \Omega)-\sigma
    \geq h-\sigma.
    $$
    By letting $\sigma\to 0$, we conclude $\dist(S,\partial \Omega)\geq h$.
\end{proof}

We define sets
\begin{align*}
\overline{Q}_{\epsil}^S & =\left\{(t, \vecx) \in[0, T] \times \overline{\Omega} \:|\: \vecx \in  \eta_{\epsil}[t]\left(\overline{S}\right)\right\} ,\\
\overline{Q}^S & =\left\{(t, \vecx) \in[0, T] \times \overline{\Omega}  \:|\: \vecx \in \eta[t]\left(\overline{S}\right)\right\}, \\
Q_{\epsil}^f & =((0, T) \times \Omega) \backslash \overline{Q}_{\epsil}^S, \\
Q^f &=((0, T) \times \Omega) \backslash \overline{Q}^S .
\end{align*}
From \eqref{covergence_body} it can be shown that
\begin{itemize}
\item for any test function $\fib\in\mathcal{R}(\overline{Q}^S)$ there exists $\epsil(\fib)>0$ such that $\fib\in\mathcal{R}(\overline{Q}_{\epsil}^S)$ for all $0<\epsil<\epsil(\fib)$, and
\item for any compact $K^f\subset Q^f$ there exists $\epsil(K^f)>0$ such that  $K^f\subset Q_{\epsil}^f$ for all $0<\epsil<\epsil(K^f)$,
\end{itemize}
which is needed to pass to the limit in the momentum equation.

Now we can identify the limit $\mathbb{P}$ from \eqref{weaklimit_convective} on the fluid part
\begin{equation}\label{identityP}
\mathbb{P} = \varrho \mathbf{u} \otimes \mathbf{u}
\text{ a.e. on } Q^f.
\end{equation}
This is enough since $\field{D}(\fib) = 0$ on $\overline{Q}^S$ for any test function $\fib\in\mathcal{R}(\overline{Q}^S)$, and the term $\mathbb{P}:\field{D}(\fib)$ coincides with $\varrho \mathbf{u} \otimes \mathbf{u}:\field{D}(\fib)$ almost everywhere on $Q_T$. As in \cite{Feir03}, the identification \eqref{identityP} is just a consequence of embedding $L^{\frac{2\beta}{\beta+1}}(U)\hookrightarrow\hookrightarrow W^{-1,2}(U)$ and the convergence 
\begin{equation*}
\ro_{\epsil}\u_{\epsil}\to \ro\u \quad \text{in } C_{weak}(\overline{J};L^{\frac{2\beta}{\beta+1}}(U)),
\end{equation*}
where $J\times U$ is a neighbourhood of some arbitrary point $(t,\vecx)\in Q^f$ such that $\overline{J\times U}\subset Q^f$, which can be deduced from the momentum equation, see \cite{Sche19} for details.

Next, we will need some better estimate for the density in order to pass to the limit in the pressure on the fluid region, this is provided by corresponding \cite[Lemma 8.1]{Feir03}, which we restate here.
\begin{lem}\label{estimate_rho}
For any compact $K^f\subset Q^f$ we have 
\begin{equation}\label{estimate_rho_1}
\norm{\ro_{\epsil}}_{L^{\beta+1}(K^f)}
+ \norm{\ro_{\epsil}}_{L^{\gamma+1}(K^f)}
\leq c(data,\delta,\mu,T, K^f).
\end{equation}
\end{lem}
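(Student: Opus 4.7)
The strategy is a localised version of the classical Lions--Feireisl Bogovski\u{\i}-operator pressure estimate, with the test function supported in a neighbourhood of $K^f$ that lies inside the approximate fluid region. Since $K^f$ is compact and $Q^f$ is open, I can fit an open cylinder $I\times W$ with $\overline{W}\subset\Omega$ compact between them, i.e.\ $K^f\subset I\times W\subset\overline{I\times W}\subset Q^f$. By the convergence $S_\epsil(t)\stackrel{b}{\rightarrow}S(t)$ in \refx{covergence_body}, there is $\epsil_1>0$ such that $\overline{I\times W}\subset Q^f_\epsil$ for every $\epsil\in(0,\epsil_1)$. Choose cut-offs $\psi\in C_c^\infty(I)$ and $\phi\in C_c^\infty(W)$ with $\psi\phi\equiv 1$ on $K^f$, and test the momentum equation of Lemma~\ref{positiveepsil} with
\begin{equation*}
\fib_\epsil(t,\vecx)=\psi(t)\,\phi(\vecx)\,\mathcal{B}\Bigl[\phi\ro_\epsil(t,\cdot)-\langle\phi\ro_\epsil(t,\cdot)\rangle_W\Bigr](\vecx),
\end{equation*}
where $\mathcal{B}$ is the Bogovski\u{\i} operator on $W$; the support condition on $\fib_\epsil$ makes it an admissible test function (after standard smooth approximation) for all $\epsil<\epsil_1$, since $\field{D}(\fib_\epsil)$ then vanishes on a neighbourhood of $\overline{Q}_\epsil^S$.

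Because $\Div\mathcal{B}[g]=g$ whenever $g$ has zero mean, the pressure contribution reduces, after moving the mean correction to the right-hand side, to
\begin{equation*}
\int_0^T\!\!\int_W \psi\phi^2\bigl(a\ro_\epsil^{\gamma+1}+\delta\ro_\epsil^{\beta+1}\bigr)\,\dx\dt,
\end{equation*}
which controls both norms on the left-hand side of \refx{estimate_rho_1} once $\psi\phi\equiv 1$ on $K^f$. The terms that are easy to estimate---the viscous term $\field{S}(\u_\epsil):\field{D}(\fib_\epsil)$, the $\epsil$-flux $\epsil\nabla\ro_\epsil\cdot\nabla\u_\epsil\cdot\fib_\epsil$, and the mean-value and $\nabla\phi$, $\psi'$ leftovers---are handled using the uniform estimates \refx{esteps}--\refx{esteps3} together with the Calder\'on--Zygmund continuity $\nabla\mathcal{B}:L^q(W)\to L^q(W)$ for $1<q<\infty$, which yields $\fib_\epsil\in L^\infty(0,T;W^{1,\beta}(\Omega))$ uniformly in $\epsil$.

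The principal obstacle is the time-derivative term $\int\ro_\epsil\u_\epsil\cdot\partial_t\fib_\epsil\,\dx\dt$, because $\partial_t\fib_\epsil$ inherits $\mathcal{B}[\phi\,\partial_t\ro_\epsil]$. Here the approximate continuity equation \refx{CEapprox}, $\partial_t\ro_\epsil=-\Div(\ro_\epsil\u_\epsil)+\epsil\Delta\ro_\epsil$, is substituted inside $\mathcal{B}$. The transport part yields, after integration by parts, terms of the schematic form $\psi\phi\,\mathcal{B}[\Div(\phi\ro_\epsil\u_\epsil)]$ plus commutators between $\phi$, $\mathcal{B}$, and $\Div$; paired against $\ro_\epsil\u_\epsil$ and combined with the convective contribution $(\ro_\epsil\u_\epsil\otimes\u_\epsil):\field{D}(\fib_\epsil)$, these are estimated by H\"older using $\ro_\epsil\in L^\infty(0,T;L^\beta)$ and the Sobolev embedding $\u_\epsil\in L^2(0,T;L^6)$, which is exactly the place where the assumption $\beta>\tfrac{9}{2}$ enters to secure integrability. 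The diffusive remainder $\epsil\,\mathcal{B}[\phi\,\Delta\ro_\epsil]$ contributes at most $\sqrt{\epsil}\,\|\nabla\ro_\epsil\|_{L^2(Q_T)}$ times an $\epsil$-uniform factor, which is controlled by \refx{esteps3}. Collecting all terms gives a bound of the form $c(data,\delta,\mu,T,K^f)$ independent of $\epsil$, proving \refx{estimate_rho_1}.
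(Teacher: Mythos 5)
Your proposal is correct and takes essentially the same route as the paper, which itself gives no details beyond invoking the standard Bogovski\u{\i}-operator localization of \cite[Lemma 8.1]{Feir03}: a space--time cut-off supported in the approximate fluid region, the test function $\psi\phi\,\mathcal{B}[\phi\ro_\epsil-\langle\phi\ro_\epsil\rangle]$, and the continuity equation to control the time derivative. The only point to tidy is that a general compact $K^f\subset Q^f$ need not fit inside a single cylinder $I\times W$ with $\overline{I\times W}\subset Q^f$ (the fluid region moves in time), so one should cover $K^f$ by finitely many such cylinders, also ensuring $\overline{I\times W}\subset Q^f_\epsil$ for small $\epsil$ on each, and sum the resulting estimates.
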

The lemma involves only the situation inside $Q^f$, the proof uses the standard Bogovskii operator and follows exactly the same lines as in \cite{Feir03}.

Lemma \ref{estimate_rho} implies that
\begin{equation*}
p_\delta(\ro_{\epsil})\rightharpoonup\overline{p_\delta(\ro)}
\quad \text{ in } L^{\frac{\beta+1}{\beta}}(K^f),
\text{ for any compact } K^f\subset Q^f.
\end{equation*}

Altogether, we can let $\epsil\to 0$ in the momentum equation, since we have shown that every term in the weak formulation converges, we obtain
\begin{multline}
\int_{\Omega}{(\ro\u\cdot\fib)(\tau)}\dx - \int_{\Omega}{\vektor{m}_0\cdot\fib(0)}\dx
\\ 
= \int_{0}^\tau \int_{\Omega} \Bigl(\ro\u \cdot\partial_t \fib +(\ro\u\otimes\u) :\field{D}(\fib)  + \overline{p_\delta(\ro)}\Div\fib -\field{S}(\u):\field{D}(\fib) \Bigr) \dx\dt.
\end{multline}
It remains to show
\begin{equation}\label{pressure_viscositylimit}
	\overline{p_\delta(\ro)} = p_\delta(\ro),
\end{equation}
id est to prove strong convergence of the density 
\begin{equation*}
\ro_{\epsil}\to \ro
\quad \text{ in } L^{1}(Q^f).
\end{equation*}

\subsubsection{Strong convergence of density}
\begin{remark}\label{renormalized_b}
By using the Lebesgue dominated convergence theorem it can be shown that the family of functions $b$ in the renormalized equation can be extended for $\ro\in L^{\infty}\bigl(0,T;L^{p}(\Omega)\bigr)$ to
\begin{equation*}
b \in C[0, \infty) \cap C^1(0, \infty), z b^{\prime}-b \in C[0, \infty),|b(z)| \leq c\left(1+z^{5 p / 6}\right),\left|z b^{\prime}(z)-b(z)\right| \leq c\left(1+z^{p / 2}\right).
\end{equation*}
\end{remark}

As in \cite{ChJiNo19}, according to Remark \ref{renormalized_b}, we can take $b(z)=z\log z$ in the renormalized version of continuity equation \eqref{renormalized_weak}
\begin{multline}\label{renormalized_eq1}
\int_{\Omega}{(b(\ro)\psi)(\tau)}\dx - \int_{\Omega}{b(\ro_{0,\delta})\psi(0)}\dx \\
= \int_{0}^\tau \int_{\Omega} \Bigl(b(\ro)\partial_t \psi +b(\ro)\u \cdot\nabla\psi -  \psi\ro\Div\u  \Bigr) \dx\dt - \int_0^\tau\int_{\Gamma_{in}}(\ro_B\log\ro_B)\uB\cdot\en \psi \de{S}\dt
\end{multline}
and in renormalized continuity equation of the approximate problem \eqref{RCEeps}
\begin{multline}\label{renormalized_eq2}
\int_{\Omega}{(b(\ro_{\epsil})\psi)(\tau)}\dx 
- \int_{\Omega}{b(\ro_{0,\delta})\psi(0)}\dx 
+ \int_0^\tau\int_{\Gamma_{in}}\Bigl(b(\ro_{\epsil}) +b'(\ro_{\epsil})(\ro_B-\ro_{\epsil})\Bigr)\uB\cdot\en \psi \de{S}\dt \\
= \int_{0}^\tau \int_{\Omega} \Bigl(b(\ro_{\epsil})\partial_t \psi 
+\bigl(b(\ro_{\epsil}) \u_{\epsil} -\epsil b'(\ro_{\epsil}) \nabla \ro_{\epsil} \bigr) \cdot\nabla\psi -  \psi\ro_{\epsil}\Div\u_{\epsil}   - \psi \epsil b''(\ro_{\epsil})|\nabla\ro_{\epsil}|^2 \Bigr) \dx\dt
\\
\leq \int_{0}^\tau \int_{\Omega} \Bigl(b(\ro_{\epsil})\partial_t \psi 
+b(\ro_{\epsil}) \u_{\epsil} \cdot\nabla\psi -  \psi\ro_{\epsil}\Div\u_{\epsil} \Bigr) \dx\dt
-
\epsil \int_{0}^\tau \int_{\Omega}(\log\ro_{\epsil}+1) \nabla \ro_{\epsil} \cdot\nabla\psi\dx\dt,
\end{multline}
for any $\tau\in[0,T]$ and any $\psi \in C_c^1\left([0,T] \times\left(\Omega \cup \Gamma_{in}\right)\right), \psi \geq 0$. Here we used the boundary conditions and definition of function $b$; the inequality follows from convexity of $b$.

Now we subtract \eqref{renormalized_eq1} from \eqref{renormalized_eq2} and take $\psi$ independent of $t\in[0,T]$
\begin{multline*}
\int_{\Omega}{b(\ro_{\epsil}(\tau))\psi}\dx 
- \int_{\Omega}{b(\ro(\tau))\psi}\dx 
+ \int_0^\tau\int_{\Gamma_{in}}\Bigl(b(\ro_B) -b'(\ro_{\epsil})(\ro_B-\ro_{\epsil})- b(\ro_{\epsil})\Bigr)|\uB\cdot\en| \psi \de{S}\dt
\\
-\int_{0}^\tau \int_{\Omega} \Bigl(\left(b(\ro_{\epsil})-b(\ro)\right) \u_{\epsil} \cdot\nabla\psi
+b(\ro)\left(\u_{\epsil}-\u\right)  \cdot\nabla\psi \Bigr) \dx\dt
+ \int_{0}^\tau \int_{\Omega} \psi\left(\ro_{\epsil}\Div\u_{\epsil}-\ro\Div\u\right)\dx\dt
\\
\leq
-\epsil \int_{0}^\tau \int_{\Omega}(\log\ro_{\epsil}+1) \nabla \ro_{\epsil} \cdot\nabla\psi\dx\dt,
\end{multline*}
convexity of the function $b$ gives
\begin{multline}\label{renormalized_eq3}
\int_{\Omega}{\ro_{\epsil}(\tau)\log \ro_{\epsil}(\tau)\psi}\dx 
- \int_{\Omega}{\ro(\tau)\log\ro(\tau)\psi}\dx 
\\
-\int_{0}^\tau \int_{\Omega} \left(\ro_{\epsil}\log\ro_{\epsil}-\ro\log\ro\right) \u_{\epsil} \cdot\nabla\psi \dx\dt
-\int_{0}^\tau \int_{\Omega}
\ro\log\ro\left(\u_{\epsil}-\u\right)  \cdot\nabla\psi \dx\dt
\\
+ \int_{0}^\tau \int_{\Omega} \psi\left(\ro_{\epsil}\Div\u_{\epsil}-\ro\Div\u\right)\dx\dt
+\epsil \int_{0}^\tau \int_{\Omega}(\log\ro_{\epsil}+1) \nabla \ro_{\epsil} \cdot\nabla\psi\dx\dt
\leq 0,
\end{multline}
for any $\tau\in[0,T]$ and any $\psi \in C_c^1\left(\Omega \cup \Gamma_{in}\right), \psi \geq 0$.

\subsubsection{Effective viscous flux}

One of the key ingredients of the proof of density convergence is the weak compactness of special quantity, called the effective viscous flux. Since the seminal work \cite{Lions93} it becomes nowadays part of the folklore to deduce by testing momentum equation with suitable test function the following lemma, see \cite[Lemma 8.2]{Feir03}.
\begin{lem}\label{effectiveviscousflux}
	For $\beta>7$ the effective viscous flux identity holds
	\begin{equation*}
	\lim_{\epsil\to 0}\int_{0}^{T}\int_{\Omega}\phi \left(p_{\delta}(\ro_{\epsil})-(\lambda+2\mu)\Div\u_{\epsil}\right)\ro_{\epsil}\,\dx\dt 
	= \int_{0}^{T}\int_{\Omega} \phi \left(\overline{p_{\delta}(\ro)}-(\lambda+2\mu)\Div\u\right)\ro\,\dx\dt
	\end{equation*} 
	for all $\phi\in\mathcal{D}(Q^f)$.
\end{lem}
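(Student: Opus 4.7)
The plan is to adapt the classical Lions--Feireisl argument for the effective viscous flux identity (see \cite{Lions93,Feir03}) to the present setting, exploiting the fact that $\supp\phi$ lies strictly inside $Q^f$. By the convergence \refx{covergence_body}, for any $\phi\in\mathcal{D}(Q^f)$ we have $\supp\phi\subset Q_{\epsil}^f$ for all $\epsil$ small enough, and therefore on $\supp\phi$ the artificial viscosities coincide with the physical fluid viscosities, $\mu_n(\chi_n)=\mu$ and $\lambda_n(\chi_n)=\lambda$. Hence, locally on $\supp\phi$, the approximate momentum equation of Lemma \ref{positiveepsil} is nothing but the standard isentropic compressible Navier--Stokes system (plus the $\epsil$-diffusion term), and the classical argument carries over.

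The key test vector fields are
\[
\Phib_\epsil = \phi\,\nabla\Delta^{-1}\bigl[\mathbf{1}_\Omega\ro_\epsil\bigr],\qquad \Phib = \phi\,\nabla\Delta^{-1}\bigl[\mathbf{1}_\Omega\ro\bigr],
\]
where $\Delta^{-1}$ denotes convolution with the Newton kernel on $\R^3$. Since $\phi\in\mathcal{D}(Q^f)$, both fields are compactly supported in $Q^f$, hence belong to $\mathcal{R}(\overline{Q}^S)$ (and to $\mathcal{R}(\overline{Q}^S_\epsil)$ for $\epsil$ small). By Lemma \ref{estimate_rho} and Calder\'on--Zygmund theory they enjoy enough regularity to be admissible in the momentum equations of Lemma \ref{positiveepsil} and of the limit system, respectively. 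Upon testing and expanding, the pressure plus viscous contributions produce
\[
\int_0^T\!\!\int_\Omega \phi\bigl(p_\delta(\ro_\epsil)\ro_\epsil - (\lambda+2\mu)\ro_\epsil\Div\u_\epsil\bigr)\dx\dt
\]
on the approximate side, together with lower-order terms carrying derivatives of $\phi$ which are controlled via Lemma \ref{estimate_rho}; the analogous expression, with $\ro$ and $\overline{p_\delta(\ro)}$ in place of $\ro_\epsil$ and $p_\delta(\ro_\epsil)$, arises on the limit side. The time derivative $\partial_t(\ro_\epsil\u_\epsil)$, the convective term $\Div(\ro_\epsil\u_\epsil\otimes\u_\epsil)$, and the $\epsil$-diffusion term, after one integration by parts and an application of the continuity equation to eliminate $\partial_t\ro_\epsil$, are rewritten as a sum of Riesz commutators of the generic form
\[
\int_0^T\!\!\int_\Omega \phi\,\u_\epsil\cdot\bigl(\ro_\epsil\,\mathcal{R}[\phi\ro_\epsil\u_\epsil] - (\ro_\epsil\u_\epsil)\,\mathcal{R}[\phi\ro_\epsil]\bigr)\dx\dt,
\]
where $\mathcal{R}=\nabla\otimes\nabla\Delta^{-1}$ is the matrix of second-order Riesz transforms.

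The main obstacle is the identification of the limit of this commutator. This is achieved by Lions' commutator identity, equivalently the Div--Curl lemma, using the strong convergence \refx{strong_conv_ro_eps}, the compact embedding $L^\beta(\Omega)\hookrightarrow\hookrightarrow W^{-1,2}(\Omega)$, and the weak convergence $\u_\epsil\weak\u$ in $L^2(0,T;W^{1,2}(\Omega))$; the commutator then converges to the analogous expression with $(\ro,\u)$. The $\epsil$-diffusion contribution vanishes in the limit thanks to the $\sqrt{\epsil}$-bound on $\nabla\ro_\epsil$ from Lemma \ref{existapprox}. The closure of the H\"older pairings inside the commutator requires the improved integrability $\ro_\epsil\in L^{\beta+1}(K^f)$ provided by Lemma \ref{estimate_rho}, which is precisely where the assumption $\beta>7$ enters. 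Subtracting the identity obtained for the limit equation from the one obtained for the approximate equation and collecting terms yields the desired effective viscous flux relation. The remaining computations are identical to those in \cite[Lemma~8.2]{Feir03} and \cite[Section~5]{ChJiNo19} and are therefore omitted.
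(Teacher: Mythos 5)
Your proposal is correct and follows exactly the route the paper itself invokes: the paper gives no independent proof of Lemma \ref{effectiveviscousflux}, but appeals to the folklore argument of testing the momentum equation with $\phi\,\nabla\Delta^{-1}[\mathbf{1}_\Omega\ro_\epsil]$ as in \cite{Lions93} and \cite[Lemma 8.2]{Feir03}, which is precisely the commutator/Div--Curl argument you spell out, with Lemma \ref{estimate_rho} supplying the needed local integrability and the constancy of the viscosities on $\supp\phi\subset Q^f$ justifying the reduction to the classical setting.
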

Since the pressure $p_{\delta}$ is nondecreasing, Lemma \ref{effectiveviscousflux} implies
\begin{equation*}
\lim_{\epsil\to 0}\int_{0}^{T}\int_{\Omega}\phi \left(\ro\Div\u-\ro_{\epsil}\Div\u_{\epsil}\right)\,\dx\dt 
\geq 0
\end{equation*} 
and therefore
\begin{equation*}
    \ro_{\epsil}\Div\u_{\epsil}\rightharpoonup\overline{\ro\Div\u}
    \text{ in } L^1(Q_T),
\end{equation*}
where
\begin{equation*}
\overline{\ro\Div\u} \geq {\ro\Div\u}
\quad
\text{a.e. in } Q^f.
\end{equation*}
As for any compact $K^S\subset Q^S$ there is $\epsil(K^S)>0$ such that 
$\Div\u_{\epsil} = 0$ a.e. on $K^S$, for all $0<\epsil<\epsil(K^S)$, we conclude that
\begin{equation}
    \overline{\ro\Div\u}\geq \ro\Div\u\quad
    \text{a.e. in } Q_T.
\end{equation}
Now, letting $\epsil\to 0$ in \eqref{renormalized_eq3} implies
\begin{equation*}
\int_{\Omega}\left(\overline{\ro\log\ro}-\ro\log\ro
\right)(\tau)\psi\dx 
-\int_{0}^\tau \int_{\Omega} \left(\overline{\ro\log\ro}-\ro\log\ro\right) \u_{\epsil} \cdot\nabla\psi \dx\dt
\leq 0,
\end{equation*}
for any $\tau\in[0,T]$ and any $\psi \in C_c^1\left(\Omega \cup \Gamma_{in}\right), \psi \geq 0$.

Then, as in \cite{ChJiNo19}, by choosing test functions
\begin{equation*}
\psi_{\epsil}(\vecx)=\begin{cases}
1 &\text { if } \operatorname{dist}\left(\vecx, \Gamma_{out}\right)>\varepsilon \\
\frac{1}{\epsil} \operatorname{dist}\left(\vecx,\Gamma_{out}\right) &\text { if } \operatorname{dist}\left(\vecx,\Gamma_{out}\right) \leq \epsil
\end{cases}
\end{equation*}
and letting $\epsil\to 0$, we obtain 
\begin{equation*}
\int_{\Omega}\left(\overline{\ro\log\ro}-\ro\log\ro
\right)(\tau)\dx 
\leq 0,
\end{equation*}
which, from the weak lower semicontinuity of convex function $b(z)=z\log z$, yields
\begin{equation*}
\overline{\ro\log\ro}=\ro\log\ro \quad\text{a.e. in } Q_T.
\end{equation*}
As usually, see e.g. \cite[Lemma 3, Section 6.3.1]{FeKaPo16}, we can conclude
\begin{equation*}
\ro_{\epsil} \rightarrow \ro \quad \text { in } L^1(Q_T).
\end{equation*}
Since strong $L^1$ convergence and boundedness in $L^{\beta+1}$ imply strong $L^{p}$ convergence for any $p<\beta+1$, we can infer from Lemma \ref{estimate_rho}
\begin{equation*}
p_{\delta}(\ro_{\epsil}) \rightarrow p_{\delta}(\ro) \quad \text { in } L^1(K^f)
\end{equation*}
for any compact $K^f\subset Q^f.$ Note that we can also pass to the limit in renormalized continuity equation. 

\subsubsection{Energy inequality}
In order to deduce the limit energy inequality, first, we integrate inequality \eqref{EIeps} over $\tau$ from $0<\tau_1 < \tau_2 < T$
\begin{multline*}
\int_{\tau_1}^{\tau_2}\int_\Omega \Bigl(\frac12\ro_{\epsil}|\u_{\epsil}-\uin|^2 + P_\delta(\ro_{\epsil}) \Bigr)(\tau)\dx  
+ \int_{\tau_1}^{\tau_2}\int_0^\tau\int_\Omega \bigl(\field{S}(\u_{\epsil}-\uin) \bigr): \field D (\u_{\epsil} - \uin)\dx \dt \dtau
\\
\leq \int_{\tau_1}^{\tau_2}\int_\Omega \Bigl(\frac12\ro_{0,\delta}|\u_0-\uin|^2 + P_\delta(\ro_{0,\delta}) \Bigr)\dx \dtau
-\int_{\tau_1}^{\tau_2}\int_0^\tau\int_\Omega \ro_{\epsil}\u_{\epsil}\cdot\nabla\uin \cdot (\u_{\epsil}-\uin)  \dx \dt\dtau\\
- \int_{\tau_1}^{\tau_2}\int_0^\tau \int_\Omega  p_\delta(\ro_{\epsil})\Div\uin \dx \dt \dtau 
-\int_{\tau_1}^{\tau_2}\int_0^\tau\int_{\Gamma_{in}} P_\delta(\ro_B)\uB\cdot\en  \,\de{S} \dt\dtau\\
- \int_{\tau_1}^{\tau_2}\int_0^\tau\int_\Omega  \field{S}(\uin): \field D (\u_{\epsil} - \uin) \dx \dt\dtau
+ \epsil \int_{\tau_1}^{\tau_2}\int_0^\tau \int_\Omega \nabla \ro_{\epsil} \cdot\nabla (\u_{\epsil}-\uin)\cdot\uin \,\dx\dt\dtau,
\end{multline*}
where we have already omitted the non-negative terms; note that $P_{\delta}$ is convex and non-negative and $\uB\cdot\en>0$ on $\Gamma_{out}$. We can pass with $\epsil\to 0$
\begin{multline*}
\int_{\tau_1}^{\tau_2}\int_\Omega \Bigl(\frac12\ro|\u-\uin|^2 + P_\delta(\ro) \Bigr)(\tau)\dx  
+ \int_{\tau_1}^{\tau_2}\int_0^\tau\int_\Omega \bigl(\field{S}(\u-\uin) \bigr): \field D (\u - \uin)\dx \dt \dtau
\\
\leq \int_{\tau_1}^{\tau_2}\int_\Omega \Bigl(\frac12\ro_{0,\delta}|\u_0-\uin|^2 + P_\delta(\ro_{0,\delta}) \Bigr)\dx \dtau
-\int_{\tau_1}^{\tau_2}\int_0^\tau\int_\Omega \ro\u\cdot\nabla\uin \cdot (\u-\uin)  \dx \dt\dtau\\
-\int_{\tau_1}^{\tau_2}\int_0^\tau\int_{\Gamma_{in}} P_\delta(\ro_B)\uB\cdot\en  \,\de{S} \dt\dtau
- \int_{\tau_1}^{\tau_2}\int_0^\tau\int_\Omega  \field{S}(\uin): \field D (\u - \uin) \dx \dt \dtau \\
- \liminf_{\epsil \rightarrow 0}\int_{\tau_1}^{\tau_2}\int_0^\tau \int_\Omega p_\delta(\ro_{\epsil})\Div\uin \dx \dt\dtau.
\end{multline*}
Since
\begin{equation*}
\int_{\tau_1}^{\tau_2}\int_0^\tau \int_{U_h} p_\delta(\ro_{\epsil})\Div\uin \dx \dt \dtau
\geq 0,
\end{equation*}
for $h>0$ sufficiently small, and
\begin{equation*}
\liminf_{\epsil \rightarrow 0}
\int_{\tau_1}^{\tau_2}\int_0^\tau \int_{\Omega\setminus U_h} p_\delta(\ro_{\epsil})\Div\uin \dx \dt\dtau
= \int_{\tau_1}^{\tau_2}\int_0^\tau \int_{\Omega\setminus U_h} p_\delta(\ro)\Div\uin \dx \dt \dtau,
\end{equation*}
we obtain
\begin{equation*}
\liminf_{\epsil \rightarrow 0}\int_{\tau_1}^{\tau_2}\int_0^\tau \int_{\Omega} p_\delta(\ro_{\epsil})\Div\uin \dx \dt \dtau
\geq 
\int_{\tau_1}^{\tau_2}\int_0^\tau \int_{\Omega\setminus U_h} p_\delta(\ro)\Div\uin \dx \dt\dtau,
\end{equation*}
which, by letting $h\to 0$, gives
\begin{equation*}
\liminf_{\epsil \rightarrow 0}\int_{\tau_1}^{\tau_2}\int_0^\tau \int_{\Omega} p_\delta(\ro_{\epsil})\Div\uin \dx \dt \dtau
\geq 
\int_{\tau_1}^{\tau_2}\int_0^\tau \int_{\Omega} p_\delta(\ro)\Div\uin \dx \dt  \dtau.
\end{equation*}
Hence, by letting $\tau_1\to\tau_2$ and setting $\tau_2=\tau$, we obtain
\begin{multline}\label{EnIn}
\int_\Omega \Bigl(\frac12\ro|\u-\uin|^2 + P_\delta(\ro) \Bigr)(\tau)\dx  
+ \int_0^\tau\int_\Omega \bigl(\field{S}(\u-\uin) \bigr): \field D (\u - \uin)\dx \dt 
\\
\leq \int_\Omega \Bigl(\frac12\ro_{0,\delta}|\u_0-\uin|^2 + P_\delta(\ro_{0,\delta}) \Bigr)\dx -\int_0^\tau\int_\Omega \ro\u\cdot\nabla\uin \cdot (\u-\uin)  \dx \dt\\
- \int_0^\tau \int_\Omega p_\delta(\ro)\Div\uin \dx \dt -\int_0^\tau\int_{\Gamma_{in}} P_\delta(\ro_B)\uB\cdot\en  \,\de{S} \dt
- \int_0^\tau\int_\Omega  \field{S}(\uin): \field D (\u - \uin) \dx \dt.
\end{multline}

\subsection{Limit $\delta\to 0$}
In this section, we suppose that $(\ro_{\delta}, \u_{\delta}, \eta_{\delta})$ is a solution obtained by Lemma \ref{positivedelta}, and $T=T_{0}(h)>0$ is defined by \refx{TMax}.

\begin{lem}\label{deltaestimates}
The solutions constructed in the previous section possess the following uniform estimates
\begin{align}
&\norm{\ro_{\delta}\abs{\u_{\delta}}^2}_{L^{\infty}(0,T;L^1(\Omega))}
\leq c(data),
\\
&\norm{\u_{\delta}}_{L^{2}(0,T;W^{1,2}(\Omega))}
\leq c(data),
\\
&\norm{\ro_{\delta}}_{L^{\infty}(0,T;L^{\gamma}(\Omega))}
\leq c(data),
\\
&\delta^{\frac{1}{\beta}}\norm{\ro_{\delta}}_{L^{\infty}(0,T;L^{\beta}(\Omega))}
\leq c(data),
\end{align}
where "data" stands for
\begin{equation*}
E_0=\int_{\Omega} \left(\frac{1}{2}\frac{\abs{\mathbf{m}_0}^2}{\ro_0}+ P(\ro_0)\right)\dx,
\, \norm{\uin}_{W^{1,\infty}(\Omega)},\overline{\ro}_B,\underline{\ro_B}.
\end{equation*}
\end{lem}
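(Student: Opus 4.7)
The plan is to read all four uniform estimates out of the limit energy inequality \eqref{EnIn}, by bounding every term on its right--hand side either by the data or by the quantities appearing on the left--hand side (up to a small constant, absorbable by Young's inequality), and then applying Gronwall's lemma. This is essentially the same program that was carried out at the Galerkin/$\varepsilon$--level in the proof of Lemma \ref{existapprox}, except that now the $\delta$--dependence must be tracked and ultimately eliminated. Korn's inequality together with $\u_\delta-\uin\in L^2(0,T;W^{1,2}_0(\Omega))$ converts the dissipation on the left into $c\mu_0\|\u_\delta-\uin\|_{L^2(0,\tau;W^{1,2}(\Omega))}^2$, and $P_\delta(\ro_\delta)\geq P(\ro_\delta)\geq c\ro_\delta^\gamma-c'$ will give the $L^\infty(0,T;L^\gamma(\Omega))$ and $\delta^{1/\beta}L^\infty(0,T;L^\beta(\Omega))$ controls once the full integral $\int_\Omega P_\delta(\ro_\delta)(\tau)\,\dx$ is shown to be bounded independently of $\delta$.

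For the right--hand side terms one proceeds as follows. The initial data contribution is bounded by $E_0$ once we note that the regularization $\ro_{0,\delta}$ may be chosen so that $\int_\Omega P_\delta(\ro_{0,\delta})\,\dx\to \int_\Omega P(\ro_0)\,\dx$ and $\int_\Omega \ro_{0,\delta}|\u_0-\uin|^2\,\dx$ stays bounded by $\int_\Omega \abs{\mathbf{m}_0}^2/\ro_0\,\dx$ plus lower--order data. The convective term is split as
\begin{equation*}
\Bigl|\int_0^\tau\!\!\int_\Omega \ro_\delta\u_\delta\cdot\nabla\uin\cdot(\u_\delta-\uin)\,\dx\dt\Bigr|
\leq c\|\nabla\uin\|_\infty\!\int_0^\tau\!\!\int_\Omega\bigl(\ro_\delta|\u_\delta-\uin|^2 + \ro_\delta|\uin||\u_\delta-\uin|\bigr)\,\dx\dt,
\end{equation*}
and the second piece is handled by Young's inequality against $\ro_\delta$ plus $\ro_\delta|\u_\delta-\uin|^2$. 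The viscous coupling $\int\field{S}(\uin)\!:\!\field{D}(\u_\delta-\uin)\,\dx\dt$ is absorbed into $\alpha\|\u_\delta-\uin\|_{W^{1,2}}^2$ plus a data constant via $\uin\in W^{1,\infty}(\R^3)\cap C_c(\R^3)$. The inflow boundary term $\int_{\Gamma_{in}}P_\delta(\ro_B)\uB\cdot\en\,\de{S}\dt$ is controlled by $\|P_\delta(\ro_B)\|_{L^\infty(\partial\Omega)}\|\uB\|_{L^\infty(\partial\Omega)}T$, which stays bounded using $\underline{\ro_B}\leq\ro_B\leq\overline{\ro_B}$.

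The most delicate term is the pressure integral $-\int_0^\tau\!\int_\Omega p_\delta(\ro_\delta)\Div\uin\,\dx\dt$, and this is exactly where the construction of $\uin$ in Lemma \ref{extend} pays off. Splitting the integral over $U_h$ and $\Omega\setminus U_h$, the contribution on $U_h$ has the correct sign because $\Div\uin\geq 0$ there, so
\begin{equation*}
-\int_0^\tau\!\!\int_{U_h} p_\delta(\ro_\delta)\Div\uin\,\dx\dt\leq 0.
\end{equation*}
On the complement, $\Div\uin\in L^\infty$ and we use $p_\delta(\ro)\leq C(\gamma,\beta)(P_\delta(\ro)+1)$ to bound the remainder by $c\int_0^\tau\!\int_\Omega P_\delta(\ro_\delta)\,\dx\dt + c(data,T)$. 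This is the crucial observation; without $\Div\uin\geq 0$ near $\partial\Omega$ one would need a better pressure estimate up to the boundary, which is simply unavailable in the inflow/outflow setting (as emphasized in the remark following Lemma \ref{extend}).

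Assembling these bounds into \eqref{EnIn} yields
\begin{equation*}
F(\tau)+c\mu_0\|\u_\delta-\uin\|_{L^2(0,\tau;W^{1,2})}^2\leq c(data)\int_0^\tau F(t)\,\dt + c(data,T),\qquad F(t):=\int_\Omega\Bigl(\tfrac12\ro_\delta|\u_\delta-\uin|^2+P_\delta(\ro_\delta)\Bigr)(t)\,\dx,
\end{equation*}
and Gronwall's lemma delivers the first two estimates together with the bounds $\|\ro_\delta\|_{L^\infty(0,T;L^\gamma)}$ and $\delta^{1/\beta}\|\ro_\delta\|_{L^\infty(0,T;L^\beta)}$. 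The remaining estimate on $\ro_\delta|\u_\delta|^2$ follows from writing $|\u_\delta|^2\leq 2|\u_\delta-\uin|^2+2|\uin|^2$ and using $\|\uin\|_\infty$ together with the just--obtained $L^\infty(0,T;L^1(\Omega))$ bound on $\ro_\delta$ (coming from $L^\infty(L^\gamma)$, $\gamma>3/2$). The main technical obstacle, as indicated above, is the pressure term: once the sign property of $\Div\uin$ near $\partial\Omega$ is exploited, the remaining inequalities are standard.
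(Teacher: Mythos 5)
Your proposal is correct and follows essentially the same route as the paper: the paper's proof of this lemma is a one-line appeal to the energy inequality \eqref{EnIn} plus Gronwall, and the detailed term-by-term bounds you supply are exactly those carried out earlier at the $\epsilon$-level in the proof of Lemma \ref{existapprox}. One small correction of emphasis: for these a priori estimates the sign condition $\Div\uin\geq 0$ on $U_h$ is not actually needed, since the elementary pointwise bound $p_\delta(\ro)\leq \max\{\gamma-1,\beta-1\}\,P_\delta(\ro)$ already reduces the whole pressure integral to a Gronwall-able term (this is what the paper does); the sign condition becomes essential only later, for passing to the limit in the term $\int p_\delta(\ro_\delta)\Div\uin$ of the energy inequality, where only interior pressure integrability is available.
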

\begin{proof}
Continuity equation yields an $L^{\infty}(0,T;L^{1}(\Omega))$ bound for the sequence $\ro_{\delta}$, and then uniform estimates follow directly from energy inequality \refx{EnIn}.
\end{proof}
Similarly as before, we obtain
\begin{align}
\ro_{\delta}\to\ro \quad&\text{in } C_{weak}([0,T];L^{\gamma}(\Omega))
\\
\u_{\delta}\rightharpoonup \u \quad&\text{in } L^{2}(0,T;W^{1,2}(\Omega))\\
\ro_{\delta}\u_{\delta}\stackrel{*}{\rightharpoonup}{\ro\u} \quad&\text{in } L^{\infty}(0,T;L^{\frac{2\gamma}{\gamma+1}}(\Omega))
\end{align}

\begin{equation*}
\eta_{\delta}[t] \to \eta[t] \text{ in } C_{loc}(\R^3) \text{ as } {\delta} \to 0 \text{ uniformly in } t \in[0,T]
\end{equation*}
and
\begin{equation}
S_{\delta}(t) \stackrel{b}{\rightarrow} S(t), \quad \text { with } S(t)=\eta[t]\left(S\right)
\end{equation}
which implies that $\{\overline{S},\eta[t]\}$ is compatible with the limit velocity $\u$.\\
We set
\begin{align*}
\overline{Q}_{\delta}^S & =\left\{(t, \vecx) \in[0, T] \times \overline{\Omega} \:|\: \vecx \in  \eta_{\delta}[t]\left(\overline{S}\right)\right\} \\
\overline{Q}^S & =\left\{(t, \vecx) \in[0, T] \times \overline{\Omega} \:|\: \vecx \in \eta[t]\left(\overline{S}\right)\right\} \\
Q_{\delta}^f & =((0, T) \times \Omega) \backslash \overline{Q}_{\delta}^S, \\
Q^f & =((0, T) \times \Omega) \backslash \overline{Q}^S ,
\end{align*}
\begin{equation*}
\ro_{\delta}\u_{\delta}\otimes\u_{\delta}\rightharpoonup \mathbb{P} \quad\text{in } L^{1}(Q_T),
\end{equation*}
where
\begin{equation*}
\mathbb{P}=\ro\u\otimes\u \quad\text{ a.e. on } Q^f.
\end{equation*}
As in \cite{Feir03}
\begin{equation}
	\norm{\ro_{\delta}}_{L^{\gamma+\theta}(K^f)}^{\gamma+\theta}
	+ \delta\norm{\ro_{\delta}}_{L^{\beta+\theta}(K^f)}^{\beta+\theta}
	\leq c(data, K^f) \quad\text{with any compact}\, K^f\subset Q^f,
\end{equation}
where $\theta>0$ is a positive constant independent of $\delta$. Therefore, we can choose subsequence such that
\begin{equation}
p_{\delta}(\ro_{\delta})\rightharpoonup
\overline{p(\ro)} = a\overline{\ro^{\gamma}}
\quad\text{in } L^{\frac{\beta+\theta}{\beta}}
\text{for any compact}\, K^f\subset Q^f.
\end{equation}

Now, we let $\delta\to 0$ in the continuity equation
\begin{multline*}
\int_{\Omega}{(\ro\psi)(\tau)}\dx - \int_{\Omega}{\ro_0\psi(0)}\dx = \int_{0}^\tau \int_{\Omega} \bigl(\ro\partial_t \psi +\ro\u \cdot\nabla\psi  \bigr) \dx\dt - \int_0^\tau\int_{\Gamma_{in}}\ro_B\uB\cdot\en \psi \de{S}\dt\\
\forall\tau\in[0,T],\:\forall \psi \in C^1_{c}\bigl([0,T]\times(\Omega\cup\Gamma_{in})\bigr)
\end{multline*}
and the momentum equation
\begin{multline}
\int_{\Omega}{(\ro\u\cdot\fib)(\tau)}\dx - \int_{\Omega}{\vektor{m}_0\cdot\fib(0)}\dx
\\ 
= \int_{0}^\tau \int_{\Omega} \Bigl(\ro\u \cdot\partial_t \fib +(\ro\u\otimes\u) :\field{D}(\fib)  + \overline{p(\ro)}\Div\fib -\field{S}(\u):\field{D}(\fib) \Bigr) \dx\dt
\\
\forall\tau\in[0,T],\:\forall \fib \in \mathcal{R}(\overline{Q}^S).
\end{multline}
Passing to the limit in renormalized continuity equation we infer that
\begin{multline} 
\int_{\Omega}{(\overline{b(\ro)}\psi)(\tau,.)}\dx - \int_{\Omega}{b(\ro_0)(.)\psi(0,.)}\dx \\
 = \int_{0}^\tau \int_{\Omega} \Bigl(\overline{b(\ro)}\partial_t \psi +b(\ro)\u \cdot\nabla\psi -  \psi\overline{(b'(\ro)\ro-b(\ro))\Div\u } \Bigr) \dx\dt - \int_0^\tau\int_{\Gamma_{in}}b(\ro_B)\uB\cdot\en \psi \de{S}\dt\\
\forall\tau\in[0,T],\:\forall \psi \in C^1_{c}\bigl([0,T]\times(\Omega\cup\Gamma_{in})\bigr), \forall b\in C^1\bigl([0,\infty)\bigr) \text{ with } b'\in C^1_c\bigl([0,\infty)\bigr).
\end{multline}

It remains to show that
\begin{equation*}
	\overline{p(\ro)}=p(\ro), \text{ and }\:\overline{b(\ro)}=b(\ro)
\end{equation*}
which is equivalent to showing that
\begin{equation*}
\ro_{\delta}\to \ro
\quad \text{ in } L^{1}(Q^f).
\end{equation*}

\subsubsection{Strong convergence of density}
Defining an auxiliary function concave on $[0,\infty)$
\begin{equation*}
T(z)= \begin{cases}  z  & \text{ on } [0,1],\\
\in [1,2] & \text{ on } (1, 3), \\
T(z)= 2 & \text{ on } [3,\infty),
\end{cases}
\end{equation*}
$$
T_k(z)= kT\left(\frac{z}{k}\right),\,k\in\mathbb{N},
$$
and repeating the procedure from the limit $\epsil\to 0$ replacing $\ro$ with $T_k(\ro)$ we deduce the following form of the effective viscous flux identity
\begin{multline}
	\lim_{\delta\to 0}\int_{0}^{T}\int_{\Omega}\phi \left(p(\ro_{\delta})-(\lambda+2\mu)\Div\u_{\delta}\right)T_k(\ro_{\delta})\,\dx\dt \\
	= \int_{0}^{T}\int_{\Omega} \phi \left(\overline{p(\ro)}-(\lambda+2\mu)\Div\u\right) \overline{T_k(\ro)}\,\dx\dt
\end{multline} 
for all $\phi\in C_c^{\infty}(Q^f)$. Using the monotonicity of the pressure, this leads to
$$
\overline{T_k(\ro)\Div\u} \geq \overline{T_k(\ro)}\Div\u\quad
\text{a.e. on } Q_T.
$$

\bigskip
In order to conclude the proof we introduce the oscillations defect measure
\begin{equation*}
\operatorname{osc}_{p}[\ro_{\delta}\rightharpoonup\ro](O)
= \sup_{k\geq 1}\left(\limsup_{\delta\to 0}\int_{O}\abs{T_k(\ro_{\delta}) - T_k(\ro)}^p\,\dx\dt\right). 
\end{equation*}
As actually $$\operatorname{osc}_{\gamma+1}[\ro_{\delta}\rightharpoonup\ro](K^S)=0,\text{ for any compact }K^S\subset Q^S$$ we can follow the strategy from \cite[Lemma 6.3.]{ChJiNo19} (see also the proof of \cite[Theorem 9.1]{Feir03}) in order to deduce 
\begin{equation}
	\operatorname{osc}_{\gamma+1}[\ro_{\delta}\rightharpoonup\ro](Q_T) < \infty.
\end{equation}	
Further, we can define
$$
L_k(z)= z\int_{1}^{z} \dfrac{T_k(s)}{s^2} \ds,\text{ id est }
zL_k'(z) - L_k(z)=T_k(z)
$$
and take $b(\ro)= L_k(\ro)$ in the renormalized continuity equation (instead of $\ro\log \ro$, which was only possible on the level $\epsil\to 0$).
Except the technical truncation, the proof of the strong convergence of density follows the same lines as before, see also \cite[Section 6.5.]{ChJiNo19}.

\begin{center}
\Large\textbf{Acknowledgements} \\[4mm]
\end{center}

\textit{This work has been supported by the Czech Science Foundation (GA\v CR) through projects GA22-01591S, (for \v S.N. and A.R.) Moreover, \it \v S. N., and A.R.   have been supported by  Praemium Academiæ of \v S. Ne\v casov\' a. Additionally, A.R. has been supported by Croatian Science Foundation under the project IP-2018-01-3706 and IP-2019-04-1140. Finally, the Institute of Mathematics, CAS is supported by RVO:67985840.}

\bibliography{knihovna}

\end{document}